\documentclass[11pt]{article}
\pdfoutput=1

\usepackage[margin=1.3in]{geometry}

\usepackage{graphicx}
\usepackage{amssymb,amsmath}
\usepackage{amsthm,enumerate}
\usepackage{hyperref,xcolor}
\usepackage{mathrsfs}
\usepackage{extarrows}
\usepackage{textcomp}

\makeatletter

\newdimen\bibspace
\setlength\bibspace{0pt}   % change this place

\makeatother

\numberwithin{equation}{section}

\newtheorem{theorem}{Theorem}[section]
\newtheorem{lemma}[theorem]{Lemma}
\newtheorem{proposition}[theorem]{Proposition}

\newtheorem{remark}[theorem]{Remark}

\def\<{\langle}
\def\>{\rangle}

%%%%%%%%%%%% Averaging Integral symbol

\def\XXint#1#2#3{{\setbox0=\hbox{$#1{#2#3}{\int}$ }
\vcenter{\hbox{$#2#3$ }}\kern-.6\wd0}}

%%%%%%%%%%%

\begin{document}

\title{Asymptotic expansion and optimal symmetry of minimal gradient graph equations in dimension 2}

\author{Zixiao Liu,\quad Jiguang Bao\footnote{Supported in part by National Natural Science Foundation of China (11871102 and 11631002).}}
\date{\today}

\maketitle

\begin{abstract}
In this paper, we study asymptotic expansion at infinity and symmetry of zero mean curvature equations of gradient graph in dimension 2, which include the Monge--Amp\`ere equation, inverse harmonic Hessian equation and the special Lagrangian equation. This refines the research of asymptotic behavior, gives the precise gap between exterior minimal gradient graph and the entire case, and extends the classification results of Monge--Amp\`ere equations.
 \\[1mm]
 {\textbf{Keywords:}}\quad minimal gradient graph equation, asymptotic expansion, optimal symmetry.
 \\[1mm]
 {\textbf{MSC~2020:}}\quad 35J60; 35C20; 35B06
\end{abstract}

\section{Introduction}

In this paper, we study qualitative properties of classical solutions of
\begin{equation}\label{equ:Perturbed-General}
  F_{\tau}\left(\lambda\left(D^{2} u\right)\right)=C_0
\end{equation}
in exterior domain of $\mathbb R^2$ and the punctured space $\mathbb R^2\setminus\{0\}$,
where
$C_0$ is a constant, $\lambda\left(D^{2} u\right)=\left(\lambda_{1}, \lambda_{2}\right)$ is the vector formed by the eigenvalues of Hessian matrix $D^{2} u$,
$\tau\in [0,\frac{\pi}{2}]$ and
$$
F_{\tau}(\lambda):=\left\{
\begin{array}{ccc}
\displaystyle  \frac{1}{2} \sum_{i=1}^{2} \ln \lambda_{i}, & \tau=0,\\
\displaystyle  \frac{\sqrt{a^{2}+1}}{2 b} \sum_{i=1}^{2} \ln \frac{\lambda_{i}+a-b}{\lambda_{i}+a+b},
  & 0<\tau<\frac{\pi}{4},\\
  \displaystyle-\sqrt{2} \sum_{i=1}^{2} \frac{1}{1+\lambda_{i}}, & \tau=\frac{\pi}{4},\\
  \displaystyle\frac{\sqrt{a^{2}+1}}{b} \sum_{i=1}^{2} \arctan \displaystyle\frac{\lambda_{i}+a-b}{\lambda_{i}+a+b}, &
  \frac{\pi}{4}<\tau<\frac{\pi}{2},\\
  \displaystyle\sum_{i=1}^{2} \arctan \lambda_{i}, & \tau=\frac{\pi}{2},\\
\end{array}
\right.
$$
$a=\cot \tau, b=\sqrt{\left|\cot ^{2} \tau-1\right|}$.
Eq.~\eqref{equ:Perturbed-General} origins from zero mean curvature equation of gradient graph $(x, D u(x))$ in $(\mathbb{R}^{2} \times \mathbb{R}^{2},g_{\tau})$ where
\begin{equation*}
g_{\tau}=\sin \tau \delta_{0}+\cos \tau g_{0}, \quad \tau \in\left[0, \frac{\pi}{2}\right]
\end{equation*}
is the linearly combined metric of standard Euclidean metric
\begin{equation*}
\delta_{0}=\sum_{i=1}^{2} d x_{i} \otimes d x_{i}+\sum_{j=1}^{2} d y_{j} \otimes d y_{j}
\end{equation*}
and the pseudo-Euclidean metric
\begin{equation*}
g_{0}=\sum_{i=1}^{2} d x_{i} \otimes d y_{i}+\sum_{j=1}^{2} d y_{j} \otimes d x_{j}.
\end{equation*}

From the definition of $F_{\tau}$ operator, $\lambda_i$ must satisfy
$$
\left\{
\begin{array}{llll}
  \lambda_i>0, & \text{for }\tau=0,\\
  \frac{\lambda_i+a-b}{\lambda_i+a+b}>0, & \text{for }\tau\in(0,\frac{\pi}{4}),\\
  \lambda_i\not=-1, & \text{for }\tau=\frac{\pi}{4},\\
  \lambda_i+a+b\not=0, & \text{for }\tau\in(\frac{\pi}{4},\frac{\pi}{2}),\\
\end{array}
\right.
$$
for $i=1,2$. Thus we separate the solution into semi-convex and semi-concave cases.
We consider Eq.~\eqref{equ:Perturbed-General} under the following conditions
\begin{equation}\label{equ:condition-semiConvex}
  \left\{
  \begin{array}{lllll}
    D^2u>0, & \tau=0,\\
    D^2u>(-a+b)I, & \tau\in (0,\frac{\pi}{4}),\\
    D^2u>-I, & \tau=\frac{\pi}{4},\\
    D^2u>-(a+b)I, & \tau\in (\frac{\pi}{4},\frac{\pi}{2}),\\
    C_0\not=0, & \tau=\frac{\pi}{2},\\
  \end{array}
  \right.
\end{equation}
where $I$ denotes the 2-by-2 identity matrix and the semi-concave case can be treated similarly.

For $\tau=0$, \eqref{equ:Perturbed-General} becomes the Monge--Amp\`ere equation $$
\det D^2u=e^{2C_0}.
$$
J\"orgens \cite{Jorgens} proved that any classical solution of such Monge--Amp\`ere equation must be a quadratic polynomial. For Monge--Amp\`ere equation on exterior domain, Ferrer--Mart\'{\i}nez--Mil\'{a}n \cite{FMM99} proved that any convex solution must be asymptotic to quadratic polynomial with additional $\ln$-term near infinity. For different proofs and extensions, we refer to Cheng--Yau \cite{ChengandYau}, Caffarelli \cite{7}, Jost--Xin \cite{JostandXin}, Caffarelli--Li \cite{CL} etc.

For $\tau=\frac{\pi}{2}$, \eqref{equ:Perturbed-General} becomes the special Lagrangian equation
$$
\sum_{i=1}^2\arctan \lambda_i(D^2u)=C_0.
$$
Fu \cite{Leifu-Bernstein} proved an analogue of Bernstein's theorem, which states that any classical solution of such special Lagrangian equation is either harmonic or a quadratic polynomial. For special Lagrangian equation on exterior domain, Li-Li-Yuan \cite{Li-Li-Yuan-Bernstein-SPL} proved that any classical solution is either harmonic or asymptotic to quadratic polynomial with additional $\ln$-term at infinity. For further extensions on higher dimension case, we refer to Yuan \cite{Yu.Yuan1,Yu.Yuan2} etc.

For $\tau=\frac{\pi}{4}$, \eqref{equ:Perturbed-General} becomes the translated inverse harmonic Hessian equation
$$
\sum_{i=1}^2\dfrac{1}{\lambda_i(D^2u)}=1,
$$
which is a special form of Hessian quotient equation. Bao--Chen--Guan--Ji \cite{BCGJ} proved the Bernstein-type theorem, which states that any convex solution of such Hessian quotient equation on $\mathbb R^2$ is a quadratic polynomial. For further discussion on Hessian equation and Hessian quotient equation, we refer to \cite{Bao-Adv.China,Chang-Yu-Sigma2,2-Hessian,Du-2021necessary,Li-Ren-Wang-JFA,Nonpolynomial,Yu.Yuan1}
and the references therein.

For general $\tau\in[0,\frac{\pi}{2}]$,
Warren \cite{Warren} proved the Bernstein-type results under suitable semi-convex conditions. For higher dimension $n\geq 3$ case, exterior Bernstein-type result and optimal symmetric of solutions on $\mathbb R^n\setminus\{0\}$ are obtained earlier
in \cite{bao-liu2020asymptotic} and \cite{bao-liu2021symmetry} respectively
 by the authors. Especially in \cite{bao-liu2020asymptotic} we also obtained higher order expansions at infinity, which is parallel to the asymptotic expansions near isolated singular point of the Yamabe equation and $\sigma_k$-Yamabe equation obtained by  Han--Li--Li \cite{Han2019-Expansion} etc.

Our first main result considers the asymptotic behavior and expansion at infinity.
\begin{theorem}\label{thm:classify-constant}
  Let $u$ be a classical solution of \eqref{equ:Perturbed-General} in $\mathbb R^2\setminus\overline{\Omega}$ with condition \eqref{equ:condition-semiConvex}, where $\Omega\subset\mathbb R^2$ is a bounded domain.
  Then there exist $\gamma, d,d_1,d_2\in\mathbb R, \beta\in\mathbb R^2$ and $  A\in\mathtt{Sym}(2)$ with $F_{\tau}(\lambda(A))=C_0$ such that
  \begin{equation}\label{equ:result-Rough-cons}
  \begin{array}{llll}
  &\displaystyle u(x)-\left(\frac{1}{2}x^TAx+\beta x+\gamma\right)- d\ln (x^TQx)\\
  =&\displaystyle (x^TQx)^{-\frac{1}{2}}(d_1\cos\theta+d_2\sin\theta)+O_k(|x|^{-2}(\ln|x|))\\
  \end{array}
  \end{equation}
  as $|x|\rightarrow\infty$ for all $k\in\mathbb N$, where $
  \theta=\frac{Q^{\frac{1}{2}}x}{(x^TQx)^{\frac{1}{2}}}
  $  and
  \begin{equation}\label{equ:def-matrixQ}
  Q=(DF_{\tau}(\lambda(A)))^{-1}=\frac{1}{2}\left(\sin\tau A^2+2\cos\tau A+\sin\tau I\right).
  \end{equation}

\end{theorem}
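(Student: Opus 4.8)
The strategy is to reduce the general $F_\tau$ equation to a perturbation of a linear (Laplace-type) equation after a suitable affine change of variables, and then extract the asymptotic expansion by iterating a classical exterior-domain expansion result for the linearized operator. Let me sketch the steps.

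Step 1: Legendre transform / linearization. First I would use the established asymptotic behavior at infinity — that $u$ is asymptotic, to leading order, to a quadratic polynomial $\frac12 x^TAx+\beta x + \gamma$ with $F_\tau(\lambda(A))=C_0$ and possibly a $d\ln|x|$ term. (For the Monge-Ampère, special Lagrangian and inverse-Hessian cases this is in the cited work of Ferrer–Martínez–Milán, Li–Li–Yuan, Bao–Chen–Guan–Ji; presumably the paper establishes the rough version \eqref{equ:result-Rough-cons} for general $\tau$ in a preceding section.) After a linear change of coordinates that normalizes $A$, I would differentiate the equation: setting $w = $ some first derivative of $u$, or better, working with the equation satisfied by $v(x) := u(x) - \frac12 x^TAx - \beta x - \gamma$, one finds that $v$ satisfies a uniformly elliptic equation $a^{ij}(x)D_{ij}v = 0$ (in non-divergence form, using that $F_\tau$ is concave/monotone along the segment between $D^2u$ and $A$) whose coefficient matrix $a^{ij}(x) \to DF_\tau(\lambda(A)) = Q^{-1}$ as $|x|\to\infty$.

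Step 2: Change variables to make the leading operator the Laplacian. I would perform the linear substitution $y = Q^{-1/2} x$ so that the constant-coefficient leading part becomes $\Delta_y$. In these coordinates $x^TQx = |y|^2$ and $\theta$ is literally $y/|y|$. Then $v$, viewed as a function of $y$, satisfies $\Delta v = f$ on an exterior domain, where $f = (\delta^{ij} - \tilde a^{ij}(y))D_{ij}v$ has faster decay than $v$ itself because $\tilde a^{ij}(y) - \delta^{ij}$ decays (like $|y|^{-?}$, quantitatively controlled by the rate in the rough expansion, together with interior Schauder/$C^k$ estimates for $v$ on dyadic annuli).

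Step 3: Iterate the harmonic expansion. Now I would invoke the standard fact that a solution of $\Delta v = f$ in an exterior planar domain, with $f$ decaying sufficiently fast and $v$ growing slower than $|y|^2$, admits an expansion $v(y) = c\ln|y| + \text{(bounded harmonic)} + \text{lower order}$; in dimension $2$ the bounded harmonic functions on the exterior domain that vanish at infinity are spanned by $\mathrm{Re}(y_1+iy_2)^{-m}$, $\mathrm{Im}(y_1+iy_2)^{-m}$ for $m\ge1$, i.e. $|y|^{-m}$ times trigonometric polynomials in $\theta$. The $m=1$ term gives exactly $|y|^{-1}(d_1\cos\theta+d_2\sin\theta)$. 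A bootstrap argument — feed the current expansion of $v$ back into $f$, solve the Poisson equation for the next correction, and estimate the remainder — upgrades the error to $O_k(|y|^{-2}\ln|y|)$, which in the $x$-variables is $O_k(|x|^{-2}\ln|x|)$ since $|y|\sim|x|$. Translating $Q^{1/2}y = x$ and $|y|^2 = x^TQx$ back gives \eqref{equ:result-Rough-cons}, and the formula \eqref{equ:def-matrixQ} for $Q$ follows from a direct computation of $DF_\tau$ at $\lambda(A)$ using the explicit forms of $F_\tau$ (all five cases unify to $\frac12(\sin\tau\,A^2 + 2\cos\tau\,A + \sin\tau\,I)$).

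Main obstacle: The delicate point is Step 1–2, namely obtaining a sharp enough decay rate for the coefficient perturbation $\tilde a^{ij} - \delta^{ij}$ and for the inhomogeneity $f$ to make the iteration close at the order $|y|^{-2}\ln|y|$. This requires that the rough expansion already controls $D^2u - A$ with a quantitative rate (not merely $o(1)$), plus scaled interior estimates to convert $C^0$ decay of $v$ into $C^k$ decay of its derivatives on annuli; keeping track of the logarithmic losses inherent to the two-dimensional Poisson equation (where $\ln|y|$ appears in the fundamental solution) is where the bookkeeping is most likely to be subtle. I expect the argument to run parallel to the higher-dimensional treatment in the authors' earlier work \cite{bao-liu2020asymptotic}, with the two-dimensional logarithms being the genuinely new complication.
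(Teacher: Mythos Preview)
Your proposal is correct and follows essentially the same route as the paper: Section~2 establishes the rough expansion $u=\tfrac12 x^TAx+\beta x+\gamma+d\ln(x^TQx)+O_k(|x|^{-1})$ by reducing each range of $\tau$ (via Legendre transform for $\tau\in(0,\tfrac\pi4)$, and the arctan identity for $\tau\in(\tfrac\pi4,\tfrac\pi2)$) to the known Monge--Amp\`ere or special Lagrangian cases; Section~3 then does exactly your Steps~1--3, writing $a_{ij}(x)D_{ij}v=0$ for $v=u-\tfrac12x^TAx-\beta x-\gamma$ via Newton--Leibniz, changing variables by $Q^{1/2}$ to reduce to a perturbed Laplacian, and reading off the $|y|^{-1}(d_1\cos\theta+d_2\sin\theta)$ term from the spherical harmonic expansion of the harmonic part after peeling off a particular solution of the Poisson equation. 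The ``obstacle'' you flag---getting $a_{ij}-a_{ij}(\infty)=O_k(|x|^{-2})$---is resolved precisely by the $O_k(|x|^{-1})$ in the rough expansion, so no further iteration is needed beyond one application of the Poisson-solution lemma.
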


\begin{remark}\label{Remark:value-of-d}
  For any $\Omega'\supset \Omega$, the constant $d$ is independent on the choice of $\Omega'$ and can be represented by the following, where $\nu$ denotes the outer normal vector on boundary of $\Omega'$ and $u_{i}, u_{ij}$ denote the partial differentials of $u$.
\begin{enumerate}[(i)]
  \item for $\tau=0$,
  \begin{equation}\label{equ:represent-d-MA}
d=\dfrac{1}{4\pi e^{C_0}}\left(
\int_{\partial\Omega'} u_{1}\left(u_{22},-u_{12}\right) \cdot \nu d s
-e^{2C_0}|\Omega'|
\right);
\end{equation}
\item for $\tau\in(0,\frac{\pi}{4})$,
  \begin{equation}\label{equ:represent-d-small}
  \begin{array}{llll}
d&=& \displaystyle \dfrac{1}{8\pi  \cdot \exp(\frac{b}{\sqrt{a^2+1}}C_0)}\int_{\partial\Omega'}(u_1+(a-b)x_1)(u_{22}+a-b,-u_{12})\cdot \nu ds\\
&&\displaystyle -\dfrac{\exp(\frac{b}{\sqrt{a^2+1}}C_0)}{8\pi }\int_{\partial\Omega'}(u_1+(a+b)x_1)(u_{22}+a+b,-u_{12})\cdot \nu ds;\\
\end{array}
\end{equation}
\item for $\tau=\frac{\pi}{4}$,
\begin{equation}\label{equ:represent-d-mid}
\begin{array}{llll}
  d&=&\displaystyle \dfrac{1}{2\pi}|\Omega'|-
  \dfrac{\sqrt 2 C_0}{8\pi}\int_{\partial\Omega'}(u_1+1)(u_{22}+1,-u_{12})\cdot\nu ds\\
  & &\displaystyle +\dfrac{1}{4\pi}\int_{\partial\Omega'} (u_1+1,u_2+1)\cdot \nu ds;
\end{array}
\end{equation}
\item for $\tau\in(\frac{\pi}{4},\frac{\pi}{2})$,
\begin{equation}\label{equ:represent-d-large}
\begin{array}{lll}
d&=&\displaystyle  \frac{b}{4 \pi}\cos (\frac{b}{\sqrt{a^2+1}}C_0) \int_{\partial \Omega'}  (u_1+ax_1,u_2+ax_2)\cdot \nu ds\\
&&+\displaystyle \dfrac{b}{4\pi}\sin (\frac{b}{\sqrt{a^2+1}}C_0) \int_{\partial\Omega'}( u_{1}+a_1x_1)\left(u_{22}+a,-u_{12}\right) \cdot \nu d s\\
&&\displaystyle -\dfrac{b}{4\pi}\sin (\frac{b}{\sqrt{a^2+1}}C_0) |\Omega'|;
\end{array}
\end{equation}
\item for $\tau=\frac{\pi}{2}$,
\begin{equation}\label{equ:represent-d-SPL}
d=\frac{1}{4 \pi}\left(\int_{\partial \Omega'} \left(\cos C_0 (u_1,u_2)\cdot \nu+\sin C_0 u_{1}\left(u_{22},-u_{12}\right) \cdot \nu\right) d s-\sin C_0 |\Omega'|\right).
\end{equation}
\end{enumerate}
\end{remark}

\begin{remark}
  For a given solution $u\in C^2(\mathbb R^2\setminus\overline\Omega)$, let
  $$
  w(x):=u- \frac{1}{2} x^{T} A x-\beta x-\gamma-d\ln (x^TQx)\quad\text{and}\quad
  W(x):=w(Q^{-\frac{1}{2}}x)
  $$
  where $A,\beta,\gamma,d,Q$ are as in Theorem \ref{thm:classify-constant} and Remark \ref{Remark:value-of-d}. Then $d_1,d_2$ can be represented as
  $$
  d_1=\lim_{r\rightarrow+\infty}\int_{\partial B_r}W(x)\cos \frac{x}{|x|}ds,\quad
  d_2=\lim_{r\rightarrow+\infty}\int_{\partial B_r}W(x)\sin \frac{x}{|x|}ds
  $$
  respectively.
\end{remark}

Our second main result considers the symmetry of solution on punctured space.
\begin{theorem}\label{Thm:classify-n=2}
  Let $u $ be a classical solution of \eqref{equ:Perturbed-General} in $\mathbb R^2\setminus\{0\}$
  with condition \eqref{equ:condition-semiConvex}.
  Then $u\in C^0(\mathbb R^2)$,
  $$
  F_{\tau}\left(\lambda\left(D^{2} u\right)\right)\geq C_0\quad\text{in }\mathbb R^2
  $$
  in viscosity sense and for the $A\in\mathtt{Sym}(2)$, $\beta\in\mathbb R^2$ from Theorem \ref{thm:classify-constant} such that $u(x)-\beta x$ is symmetric in eigenvector directions of $A$, i.e.,
  \begin{equation}\label{equ:symmetry}
  u(\widetilde x)-\beta \widetilde x=u(x)-\beta x,\quad\forall~\{\widetilde x\in\mathbb R^2:(O^T\widetilde x)_i=\pm(O^T x)_i,\quad\forall~i=1,2\},
  \end{equation}
  where $O$ is an orthogonal matrix such that $OAO^T$ is diagonal.
\end{theorem}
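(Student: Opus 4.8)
The plan is to first upgrade $u$ to a continuous function on all of $\mathbb R^2$ together with the claimed global subsolution inequality, and then to deduce the symmetry from a linearization argument on the punctured plane combined with a Liouville-type statement. For the continuous extension, fix a ball $B_\varepsilon$: applying Theorem~\ref{thm:classify-constant} on $\mathbb R^2\setminus\overline{B_\varepsilon}$ produces $A\in\mathtt{Sym}(2)$, $\beta\in\mathbb R^2$, $\gamma,d\in\mathbb R$ and $Q$ as in \eqref{equ:def-matrixQ} together with the expansion \eqref{equ:result-Rough-cons} at infinity, and interior elliptic regularity gives $u\in C^\infty(\mathbb R^2\setminus\{0\})$. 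For $\tau\in[0,\tfrac\pi4]\cup(\tfrac\pi4,\tfrac\pi2)$, condition \eqref{equ:condition-semiConvex} reads $D^2u>cI$ with $c\in\{0,-a+b,-1,-(a+b)\}$, so $w:=u-\tfrac c2|x|^2$ is convex on $\mathbb R^2\setminus\{0\}$; a finite convex function on a punctured planar domain is bounded near the puncture (bounded above because the origin lies in the interior of a triangle with vertices in $\mathbb R^2\setminus\{0\}$ and Jensen applies there, bounded below by reflecting a fixed interior point through points approaching the origin) and hence extends to a finite convex, in particular continuous, function on $\mathbb R^2$. This gives $u\in C^0(\mathbb R^2)$ in these cases. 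For $\tau=\tfrac\pi2$ the equation carries no semiconvexity, so I first apply a Lewy--Yuan rotation to the Lagrangian graph $(x,Du(x))$ by an angle chosen — here $C_0\neq0$ is used — so that the rotated potential solves \eqref{equ:Perturbed-General} with some $\tau'<\tfrac\pi2$ in the semiconvex regime on an exterior domain, to which the previous case applies; inverting the rotation returns $u\in C^0(\mathbb R^2)$.

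With $u\in C^0(\mathbb R^2)$ a classical, hence viscosity, solution of $F_\tau(\lambda(D^2u))=C_0$ in $\mathbb R^2\setminus\{0\}$ and the operator uniformly elliptic on the relevant cone, the singleton $\{0\}$ is removable for continuous viscosity subsolutions: any $C^2$ test function touching $u$ from above at the origin can be perturbed so the contact point moves into $\mathbb R^2\setminus\{0\}$, where the classical equation gives the required inequality, whence $F_\tau(\lambda(D^2u))\ge C_0$ in the viscosity sense on all of $\mathbb R^2$.

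For the symmetry, pick an orthogonal $O$ with $OAO^T$ diagonal and set $R_1=O\,\mathrm{diag}(-1,1)\,O^T$, $R_2=O\,\mathrm{diag}(1,-1)\,O^T$; then $R_i^TAR_i=A$, and since $Q$ is a polynomial in $A$ by \eqref{equ:def-matrixQ}, also $R_i^TQR_i=Q$. Put $\hat u:=u-\beta x$ and $v_i:=\hat u-\hat u\circ R_i$. Substituting $x\mapsto R_ix$ in the expansion \eqref{equ:result-Rough-cons} for $\hat u$ and using $R_i^TAR_i=A$, $R_i^TQR_i=Q$ and $|R_ix|=|x|$, the function $\hat u\circ R_i$ obeys the same asymptotics as $\hat u$ through the $\ln$-term, so $v_i=O(|x|^{-1})$ at infinity; in particular $v_i\to0$. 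Both $\hat u$ and $\hat u\circ R_i$ solve \eqref{equ:Perturbed-General} with \eqref{equ:condition-semiConvex} in $\mathbb R^2\setminus\{0\}$, since conjugation of the Hessian by the orthogonal $R_i$ leaves the eigenvalues and the semiconvexity cone invariant; as $M\mapsto F_\tau(\lambda(M))$ is smooth with positive-definite derivative on the admissible convex cone, the fundamental theorem of calculus along the segment from $D^2(\hat u\circ R_i)$ to $D^2\hat u$ yields $a_i^{kl}(x)D_{kl}v_i=0$ in $\mathbb R^2\setminus\{0\}$ with $a_i^{kl}$ smooth and locally uniformly elliptic there. Finally $v_i\in C^0(\mathbb R^2)$, $v_i(0)=0$, and $v_i\to0$ at infinity, so $v_i$ extends continuously to the one-point compactification $S^2=\mathbb R^2\cup\{\infty\}$ with value $0$ at both $0$ and $\infty$; its maximum over $S^2$ is attained, and if attained at a point of $\mathbb R^2\setminus\{0\}$ the strong maximum principle forces $v_i$ constant, hence $v_i\equiv v_i(0)=0$, while otherwise the maximum is $0$; thus $v_i\le0$, and the same for $-v_i$ gives $v_i\equiv0$, i.e. $\hat u=\hat u\circ R_i$. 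Since $R_1,R_2$ generate the group $\{x\mapsto\widetilde x:(O^T\widetilde x)_j=\pm(O^Tx)_j,\ j=1,2\}$, this is exactly \eqref{equ:symmetry}; for $\tau=\tfrac\pi2$ one runs this argument for the rotated potential and transfers back, the rotation acting on Hessians by an operation commuting with orthogonal conjugation and hence preserving eigenvector directions.

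The heart of the matter — and the step I expect to require the most care — is the continuous extension across the origin. For $\tau<\tfrac\pi2$ it rests on the classical removability of isolated singularities for planar convex functions, but for $\tau=\tfrac\pi2$ one must exploit $C_0\neq0$ to rotate into a semiconvex regime and, more delicately, verify that the rotated punctured-plane solution is genuinely a solution on an exterior domain to which Theorem~\ref{thm:classify-constant} and the convexity argument still apply (a priori $D^2u$, hence $Du$, may be unbounded near the origin). The global viscosity inequality is then a soft removability statement, and the symmetry reduces to the short Liouville argument above once the $C^0$ extension and the exterior expansion are in hand.
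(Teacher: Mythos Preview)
Your overall strategy matches the paper's: continuity and the global viscosity inequality via removable-singularity results, then symmetry by linearizing $F_\tau$ between $\hat u$ and its reflection and applying the maximum principle on $\mathbb R^2\setminus\{0\}$ with the two-point boundary data $v_i(0)=0$, $v_i(\infty)=0$. The paper compresses exactly these steps into citations of Proposition~2.1 of Jin--Xiong, Theorem~1.1 of Caffarelli--Li--Nirenberg, and Lemma~2.4 of the authors' earlier paper; your unpacked versions are fine.

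Two remarks. First, for $\tau=\tfrac\pi2$ the Lewy--Yuan rotation is an unnecessary detour, and the concern you raise about it is avoidable: the equation $\arctan\lambda_1+\arctan\lambda_2=C_0$ with $C_0\neq0$ (say $C_0>0$, else replace $u$ by $-u$) already forces $\arctan\lambda_i=C_0-\arctan\lambda_j>C_0-\tfrac\pi2$, hence $\lambda_i>\tan(C_0-\tfrac\pi2)=-\cot C_0$, so $D^2u>-\cot(C_0)\,I$ automatically and your convexity-extension argument for $\tau<\tfrac\pi2$ applies verbatim. Second, your one-line sketch of the viscosity removability (``perturb so the contact point moves into $\mathbb R^2\setminus\{0\}$'') is too quick---a generic perturbation of a test function touching at the origin can keep the contact there---and the actual argument, as in the cited Caffarelli--Li--Nirenberg theorem, requires more care; the conclusion, however, is correct.
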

\begin{remark}
  As in Proposition 1.5 of our earlier work \cite{bao-liu2021symmetry}, the symmetry in Theorem \ref{Thm:classify-n=2} is optimal for $\tau\in(0,\frac{\pi}{2}]$ in the sense  that if $u$ satisfies
  $$
  u(x)+\frac{1}{2}K|x|^2=U\left(\frac{1}{2}x^T(A+KI)x\right),\quad K=\left\{
  \begin{array}{llll}
    a-b, & 0<\tau<\frac{\pi}{4},\\
    1, & \tau=\frac{\pi}{4},\\
    a, & \frac{\pi}{4}<\tau<\frac{\pi}{2},\\
    0, & \tau=\frac{\pi}{2},\\
  \end{array}
  \right.
  $$
  for some scalar function $U:\mathbb R\rightarrow\mathbb R$ and $A+KI>0$, then $u$ is either a quadratic polynomial or radially symmetric. For $\tau=0$ case, as pointed out in  J\"orgens \cite{Jorgens} and Jin--Xiong \cite{Jin-Xiong}, any convex solution of \eqref{equ:Perturbed-General} in $\mathbb R^2\setminus\{0\}$ with $\tau=0$ can be represented by
  $$
  u=e^{C_0}\int_0^{(x^TAx)^{\frac{1}{2}}}(r^2+c_1)^{\frac{1}{2}}dr+c_2
  $$
  for some $c_1\geq 0, c_2\in\mathbb R$ and $0<A\in\mathtt{Sym}(2)$ with $\det A=e^{2C_0}$.
\end{remark}

\begin{remark}
 For $C_0=0$ and $\tau=\frac{\pi}{2}$, Eq.~\eqref{equ:Perturbed-General} can be translated into the harmonic equation $\Delta u=0$. Similar asymptotic and symmetry results hold under various extra conditions can be found in \cite{Leifu-Bernstein,20} etc.
\end{remark}
\begin{remark}\label{rem:final}
  In fact, the arguments in Theorem \ref{thm:classify-constant} hold for a more general class of equation. Let $u$ be a classical solution of
  \begin{equation}\label{equ:general-threeterm}
    c_2\lambda_1\lambda_2+c_1(\lambda_1+\lambda_2)+c_0=0\quad\text{in }\mathbb R^2\setminus\overline\Omega,
  \end{equation}
  where $c_0,c_1,c_2\in\mathbb R$ satisfies
  \begin{equation}\label{cond:structure}
  c_2\not=0\quad\text{and}\quad c_0c_2<c_1^2.
  \end{equation}
  By a direct computation, all five situations in Theorem \ref{thm:classify-constant} are concluded in \eqref{equ:general-threeterm}.
\end{remark}

The paper is organized as follows. In sections \ref{sec-asymptotic-2} and \ref{sec-asymptotic-refine} we prove the asymptotic expansion result in Theorem \ref{thm:classify-constant}. In section \ref{sec-symmetric} we prove Theorem \ref{Thm:classify-n=2}. In section \ref{sec-remark} we prove Remark \ref{Remark:value-of-d}.

\section{Asymptotic behavior at infinity}\label{sec-asymptotic-2}

In this section, we prove that under the conditions of Theorems \ref{thm:classify-constant}, there exist $\gamma, d\in\mathbb R, \beta\in\mathbb R^2, A\in\mathtt{Sym}(2)$ with $F_{\tau}(\lambda(A))=C_0$ such that
\begin{equation}\label{equ:asym-Behavior}
u(x)= \frac{1}{2}x^TAx+\beta x+\gamma +d\ln (x^TQx)+O_k(|x|^{-1})
\end{equation}
as $|x| \rightarrow \infty$ for all $k \in \mathbb{N}$, where $Q$ is as in \eqref{equ:def-matrixQ}.

By interior regularity as Lemma 17.16 of \cite{GT}, we have $u\in C^{k}(\mathbb R^2\setminus\overline{\Omega})$ for any $k\geq 2$. Suppose  $\Omega\subset B_{R}$, where $B_R$ denote the ball with radius $R$ and centered the origin.
Since $u$ satisfies semi-convex condition \eqref{equ:condition-semiConvex}, then
$$
v:=\left\{
\begin{array}{lllll}
  u, & \tau=0,\\
  u+\frac{a-b}{2}|x|^2, & \tau\in(0,\frac{\pi}{4}),\\
  u+\frac{1}{2}|x|^2, & \tau=\frac{\pi}{4},\\
  u+\frac{a+b}{2}|x|^2, & \tau\in(\frac{\pi}{4},\frac{\pi}{2}),\\
\end{array}
\right.
$$
has positive definite Hessian matrix on an open neighbourhood of $\overline{B_{R+2}\setminus B_{R+1}}$. By Theorem 3.2 of  \cite{Min-ExtensionConvexFunc}, $v|_{\overline{B_{R+2}\setminus B_{R+1}}}$ can be extended to a $C^k$ function on the convex hull $\overline{B_{R+2}}$ with positive definite Hessian. Consequently, this provides an extension of $u|_{\overline{B_{R+2}\setminus B_{R+1}}}$ to $\overline{B_{R+2}}$ that preserves the semi-convexity. Thus hereinafter, we may assume without loss of generality that $u\in C^k(\mathbb R^2)$ for any $k\geq 2$ and condition \eqref{equ:condition-semiConvex} holds on entire $\mathbb R^2$.

 For $\tau=0$ case, we have
 $Q=A$ and
 \eqref{equ:asym-Behavior} is
proved in Theorem 1.2 of \cite{CL} and Theorem 1.1 of \cite{Bao-Li-ZhangMA}.   For $\tau=\frac{\pi}{4}$ and $\tau=\frac{\pi}{2}$ cases, \eqref{equ:asym-Behavior} follows from Theorems 4.3 and 1.1  of \cite{Li-Li-Yuan-Bernstein-SPL} respectively.

\begin{proof}[Proof of \eqref{equ:asym-Behavior}, $\tau\in(0,\frac{\pi}{4})$ case]
  Let $$
\overline{u}(x):=u(x)+\dfrac{a+b}{2}|x|^2,
$$
then
\begin{equation*}
D^{2} \overline{u}=D^{2} u+(a+b) I>2bI\quad\text{in }\mathbb{R}^2.
\end{equation*}
Let $(\widetilde{x},\widetilde{v})$ be the Legendre transform of $(x,\overline{u})$, i.e.,
$$
\left\{
\begin{array}{ccc}
  \widetilde{x}:=D\overline{u}(x)=Du(x)+(a+b)x,\\
  D v(\widetilde{x}):=x\\
\end{array}
\right.
$$
and  we have
\begin{equation*}
D^{2} v(\widetilde{x})=\left(D^{2} \overline{u}(x)\right)^{-1}=(D^2u(x)+(a+b)I)^{-1}<\frac{1}{2b}I.
\end{equation*}
Let \begin{equation}\label{LegendreTransform}
\widetilde{u}(\widetilde{x}):=\dfrac{1}{2}|\widetilde{x}|^2-2bv(\widetilde{x}).
\end{equation}
By a direct computation,
\begin{equation}\label{property-Legendre}
\widetilde{\lambda}_{i}\left(D^{2} \widetilde{u}\right)=1-2 b \cdot \frac{1}{\lambda_{i}+a+b}=\frac{\lambda_{i}+a-b}{\lambda_{i}+a+b}\in (0,1).
\end{equation}
Thus $\widetilde{u}(\widetilde{x})$ satisfies the following Monge--Amp\`ere type equation
\begin{equation}\label{temp-16}
\sum _ { i = 1 } ^ { 2 } \ln \widetilde{\lambda_i}=\frac{2b}{\sqrt{a^2+1}}C_0,\quad \text{in }\mathbb{R}^2\setminus\overline{\widetilde{\Omega}}
\end{equation}
for some bounded set $\widetilde{\Omega}=D\overline{u}(\Omega')$.

By Theorem 1.2 of \cite{CL}, we have
\begin{equation}\label{equ:temp-1}
\limsup_{|\widetilde x|\rightarrow\infty}|\widetilde x|^{k+1}\left|
D^k\left(
\widetilde u(\widetilde x)-\left(
\frac{1}{2}\widetilde x^T\widetilde A\widetilde x+\widetilde \beta\widetilde x+\widetilde d\ln (\widetilde x^T\widetilde A\widetilde x)+\widetilde \gamma
\right)
\right)
\right|<\infty
\end{equation}
for all $k\in\mathbb N$
for some $\widetilde\beta\in\mathbb R^n,\widetilde \gamma,\widetilde d\in\mathbb R$ and $\widetilde A\in\mathtt{Sym}(2)$ satisfying $\det \widetilde A=\frac{2b}{\sqrt{a^2+1}}C_0$ and $0<\widetilde A\leq I$. Now we prove that all eigenvalues of $\widetilde A$ are strictly less than $1$, which implies $I-\widetilde A$ is an invertible matrix. By contradiction and rotating the $\widetilde{x}$-space to make $\widetilde{A}$ diagonal, we suppose that that $\widetilde{A}_{11}=$
  $1.$ Then by \eqref{LegendreTransform} and the asymptotic behavior of $D\widetilde{u}$, there exists $\widetilde{\beta}_1\in\mathbb{R}$ such that
\begin{equation}\label{strip-argument}
x_1=D_{1}v(\widetilde{x})=\widetilde{\beta}_{1}+O\left(|\widetilde{x}|^{-1}\right)
\end{equation}
as $|\widetilde{x}|\rightarrow\infty$. By triangle inequality and semi-convex condition in \eqref{equ:condition-semiConvex},
\begin{equation}\label{limitofX}
|\widetilde{x}|\geq
-|\widetilde{0}|+|\widetilde{x}-\widetilde{0}|
> -|\widetilde{0}|+2b|x|,
\end{equation}
\eqref{strip-argument} implies that $\mathbb{R}^{2}$ is bounded in the $x_{1}$-direction, hence  a contradiction.  Thus $\lambda_i(\widetilde{A})<1$ for $i=1,2$. Such ``strip argument'' is also used in  \cite{Li-Li-Yuan-Bernstein-SPL} etc.

Consequently,
$$
D^2u(x)=2b(I-\widetilde{A})^{-1}-(a+b)I+O(|x|^{-2})
$$
as $|x|\rightarrow\infty$.
Substituting the asymptotic behavior \eqref{equ:temp-1} with $k=1$ into the inverse Legendre transform, we have
$$
x=Dv(\widetilde x)=\frac{1}{2b}\left(
I-\widetilde A+\frac{\widetilde d\widetilde A}{\widetilde x^T\widetilde A\widetilde x}
\right)\widetilde x-\frac{\widetilde \beta}{2b}+O(|\widetilde x|^{-2}).
$$
Since the eigenvalues of $\widetilde A$ is bounded away from $1$, we have
$$
\widetilde x=\left(
I-\widetilde A
\right)^{-1}\left(2b x+\widetilde \beta-\frac{\widetilde d\widetilde A\widetilde x}{\widetilde x^T\widetilde A\widetilde x}\right)+O(|\widetilde x|^{-2}).
$$
Iterate and using $0<\widetilde A<I$ we have
$$
\frac{\widetilde d\widetilde A\widetilde x}{\widetilde x^T\widetilde A\widetilde x}
=\dfrac{\widetilde d\widetilde A(I-\widetilde A)^{-1}(2bx+\widetilde\beta)}{
(2bx+\widetilde\beta)^T(I-\widetilde A)^{-1}\widetilde A(I-\widetilde A)^{-1}(2bx+\widetilde\beta)
}+O(|\widetilde x|^{-2})
$$
as $|\widetilde x|\rightarrow\infty.$ Together with \eqref{limitofX},
$$
\begin{array}{llll}
&D\bar u(x)=\widetilde x\\
=&\displaystyle (I-\widetilde A)^{-1}(2bx+\widetilde\beta)
-\widetilde d\dfrac{(I-\widetilde A)^{-1}\widetilde A(I-\widetilde A)^{-1}(2bx+\widetilde\beta)}{
(2bx+\widetilde\beta)^T(I-\widetilde A)^{-1}\widetilde A(I-\widetilde A)^{-1}(2bx+\widetilde\beta)
}+O(|x|^{-2})\\
=&\displaystyle
2b(I-\widetilde A)^{-1}x+(I-\widetilde A)^{-1}\widetilde\beta
-\widetilde d\dfrac{(I-\widetilde A)^{-1}\widetilde A(I-\widetilde A)^{-1}(2bx+\widetilde\beta)}{
(2bx+\widetilde\beta)^T(I-\widetilde A)^{-1}\widetilde A(I-\widetilde A)^{-1}(2bx+\widetilde\beta)
}+O(|x|^{-2}).\\
\end{array}
$$
Integrating term by term and letting $A:=2b(I-\widetilde A)^{-1}-(a+b)I$ we have $\beta\in\mathbb R^n, \gamma, d\in\mathbb R$ such that
$$
\begin{array}{llll}
& \bar u(x)\\
=& bx^T(I-\widetilde A)^{-1}x+\beta x+d\ln (2bx^T(I-\widetilde A)^{-1}\widetilde A(I-\widetilde A)^{-1}2bx)+\gamma+O(|x|^{-1})\\
=& \frac{1}{2}x^T(A+(a+b)I)x+\beta x+d\ln (x^T\overline Qx)+\gamma+O(|x|^{-1})
\end{array}
$$
as $|x|\rightarrow\infty$, where
$$
\begin{array}{llll}
\overline Q &= & (A+(a+b)I)\left(I-2b(A+(a+b)I)^{-1}\right)(A+(a+b)I)\\
&=& (A+(a+b)I)^2-2b(A+(a+b)I)\\
&=& A^2+2aA+(a^2-b^2)I.
\end{array}
$$
By the definition of $a,b$, we have
$$
\overline Q=\dfrac{1}{\sin \tau}\left(\sin\tau A^2+2\cos \tau A+\sin\tau I\right)=\dfrac{2}{\sin\tau}Q
$$
and hence
$$
d\ln (x^T\overline Qx)+\gamma=d\ln (x^TQx)+\gamma-d\ln\frac{\sin\tau}{2}.
$$
The result follows immediately.
\end{proof}
\begin{proof}[Proof of \eqref{equ:asym-Behavior}, $\tau\in(\frac{\pi}{4},\frac{\pi}{2})$ case]

By a direct computation (see for instance \cite{huang2019entire,Warren}), if
 $\lambda_i>-a-b,$ for $i=1,2$, then
  \begin{equation}\label{equ:identity}
  \sum_{i=1}^2\arctan \frac{\lambda_{i}+a-b}{\lambda_{i}+a+b}=\sum_{i=1}^2\arctan \left(\frac{\lambda_{i}+a}{b}\right)-\frac{\pi}{2}.
  \end{equation}
Let
  \begin{equation*}
v(x):=\frac{u(x)}{b}+\frac{a}{2 b}|x|^{2}.
\end{equation*}
 By a direct computation,
  \begin{equation*}
\sum_{i=1}^2\arctan \lambda_i(D^2v)=\frac{b}{\sqrt{a^2+1}}C_0
+\frac{\pi}{2}
\end{equation*}
and $D^2v>-I$
in $\mathbb R^2$. For $C_0\not=-\frac{\pi}{2}\frac{\sqrt{a^2+1}}{b}$, the result follows immediately by $\tau=\frac{\pi}{2}$ case of Theorem \ref{thm:classify-constant}. More rigorously from the result in $\tau=\frac{\pi}{2}$ case, we have
$$
v=\left(\frac{1}{2}x^T\overline Ax+\overline\beta x+\overline\gamma\right)+\overline d\ln (x^T(\overline A^2+I)x)+O_k(|x|^{-1})
$$
as $|x|\rightarrow\infty$ for some $\overline{\gamma},\overline d,\in\mathbb R, \overline\beta\in\mathbb R^2$ and $\overline A\in\mathtt{Sym}(2)$. Consequently,
$$
\begin{array}{llll}
  u(x) &=& \displaystyle \left(\frac{1}{2}x^T
  (b\overline A-aI)x+b\overline\beta x+b\overline\gamma\right)+b\overline d\ln (x^T(\overline A^2+I)x)+O_k(|x|^{-1})\\
  &=:& \displaystyle\left(\frac{1}{2}x^TAx+\beta x+\gamma\right)+d\ln (x^TQx)+O_k(|x|^{-1}),\\
\end{array}
$$
where $Q$ is as in \eqref{equ:def-matrixQ}, $$
A=b\overline A-aI,\quad\beta=b\overline\beta,\quad
\overline A^2+I=\dfrac{2}{b^2\sin\tau}Q,\quad
\gamma=b\overline\gamma-b\overline d\ln\frac{b^2\sin\tau}{2}
\quad\text{and}\quad d=b\overline d.
$$
The result follows immediately.

For $C_0=-\frac{\pi}{2}\frac{\sqrt{a^2+1}}{b}$, then $v$ is harmonic on exterior domain of $\mathbb R^2$ and $D^2v>-I$. In this case, the result follows by Bernstein-type results as in \cite{Leifu-Bernstein,20} etc. More rigorously, by differentiating the equation twice and Theorem 3 from \cite{20}, there exists $A\in\mathtt{Sym}(2)$ with $\lambda_1(A)+\lambda_2(A)=0$ such that $D^2v\rightarrow A$ as $|x|\rightarrow\infty$. By a direct computation, $v-\frac{1}{2}x^TAx=o(|x|^2)$ and is harmonic on exterior domain of $\mathbb R^2$. Hence by spherical harmonic expansions as in (2.31) and (2.32) of \cite{Bao-Li-ZhangMA}, $v$ converge to a quadratic function with additional $\ln$ term. This finishes the proof of desired result.
\end{proof}

We finish this section by proving an analogue for Eq.~\eqref{equ:general-threeterm}, which will be used to proved Remark \ref{rem:final}. We may assume without loss of generality that $c_2>0$, otherwise we rewrite Eq.~\eqref{equ:general-threeterm} into
$$
-c_2\lambda_1\lambda_2-c_1(\lambda_1+\lambda_2)-c_0=0,
$$
which still satisfies \eqref{cond:structure}. By a direct computation and condition \eqref{cond:structure}, Eq.~\eqref{equ:general-threeterm} can be written into
$$
\left(\lambda_1+\frac{c_1}{c_2}\right)\cdot \left(\lambda_2+\frac{c_1}{c_2}\right)=\dfrac{c_1^2-c_0c_2}{c_2^2}>0\quad\text{in }\mathbb R^2\setminus\overline\Omega.
$$
Thus we may assume without loss of generality that
$$
D^2u>-\frac{c_1}{c_2}I,
$$
otherwise we consider $-u$ instead.
Let
$$
\bar u(x):=u(x)+\dfrac{c_1}{c_2}|x|^2
$$
and $(\widetilde x,\widetilde u)$ be the Legendre transform of $(x,\bar u)$. By a direct computation, $\widetilde u$ is convex and satisfies the following Monge--Amp\`ere type equation
$$
\det (D^2\widetilde u)=\dfrac{c_2^2}{c_1^2-c_0c_2}.
$$
Then the desired result follows by Theorem 1.2 of \cite{Caffarelli-Li-2003} and strip argument as above.

\section{Refined asymptotics}\label{sec-asymptotic-refine}

In this section, we prove Theorem \ref{thm:classify-constant} by the asymptotic behavior proved in \eqref{equ:asym-Behavior} and analysis on linear elliptic equations. The linear elliptic equation below comes from the linearized equation satisfied by
$$
v(x)=u(x)-\left(\frac{1}{2} x^{T} A x+\beta x+\gamma\right).
$$
By \eqref{equ:asym-Behavior}, we may assume (when needed) that
\begin{equation}\label{equ:asym-decomp}
v=d\ln (x^TQx)+O_l(|x|^{-1})
\end{equation}
as $|x|\rightarrow+\infty$
for some positive matrix $Q$ and $d\in\mathbb R$.

\begin{lemma}\label{Lem-fastConverge}
  Let $g\in C^{\infty}(\mathbb R^2)$ satisfy
\begin{equation}\label{equ:vanishing-g-Lp}
\|g(r \cdot)\|_{L^{p}\left(\mathbb{S}^{1}\right)} \leq c_{0} r^{-k_{1}}(\ln r)^{k_{2}} \quad \forall ~r>1
\end{equation}
for some $c_0>0, k_1>2, k_2\geq 0$ and $p\geq 2$. Then there exists a smooth solution $v$ of
\begin{equation}\label{equ:Laplace}
  \Delta v=g\quad\text{in }\mathbb R^2\setminus\overline{B_1}
\end{equation}
such that
\begin{equation}\label{equ:FastVanishing}
|v(x)|  \leq Cc_0|x|^{2-k_1}(\ln|x|)^{k_2+1}
\end{equation}
for $|x|>1$ for some $C>0$.
\end{lemma}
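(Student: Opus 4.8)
The plan is to solve \eqref{equ:Laplace} mode by mode in a Fourier expansion in the angular variable and then reassemble. Writing $x=r\omega$ with $\omega=(\cos\theta,\sin\theta)\in\mathbb{S}^1$ and $g(r\omega)=\sum_{m\in\mathbb Z}g_m(r)e^{im\theta}$, $g_m(r)=\frac{1}{2\pi}\int_{\mathbb{S}^1}g(r\omega)e^{-im\theta}\,d\theta$, Hölder's inequality on the circle together with \eqref{equ:vanishing-g-Lp} gives the uniform bound $|g_m(r)|\le Cc_0 r^{-k_1}(\ln r)^{k_2}$ for $r>1$; moreover $g\in C^\infty$ forces $g_m$ and its $r$-derivatives to decay faster than any power of $1+|m|$, locally uniformly on $(1,\infty)$. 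Since $\Delta=\partial_{rr}+\frac1r\partial_r+\frac1{r^2}\partial_{\theta\theta}$, one seeks $v=\sum_m v_m(r)e^{im\theta}$ with $v_m''+\frac1r v_m'-\frac{m^2}{r^2}v_m=g_m$ on $(1,\infty)$, and one should check $g_{-m}=\overline{g_m}$ so that a conjugation-symmetric construction yields a real $v$ (equivalently, one may work with $\cos m\theta,\sin m\theta$ throughout).

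For $m\neq0$ I would produce $v_m$ by variation of parameters against the fundamental system $\{r^{|m|},r^{-|m|}\}$, giving
$$
v_m(r)=-\frac{1}{2|m|}\,r^{|m|}\!\int_r^\infty s^{1-|m|}g_m(s)\,ds-\frac{1}{2|m|}\,r^{-|m|}\!\int_{r_m}^{r}s^{1+|m|}g_m(s)\,ds,
$$
where $r_m=\infty$ when $|m|<k_1-2$ and $r_m=1$ otherwise (the first choice being forced in order not to leave a residual $r^{-|m|}$ term decaying slower than $r^{2-k_1}$); for $m=0$ one integrates $(rv_0')'=rg_0$ twice from $\infty$. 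Using $k_1>2$ and the elementary estimates for $\int_r^\infty s^{-\gamma}(\ln s)^{k_2}\,ds$ and $\int_1^r s^{\delta}(\ln s)^{k_2}\,ds$, these formulas give, for the finitely many $|m|<M:=\lceil 2k_1\rceil$, the bound $|v_m(r)|\le Cc_0\,r^{2-k_1}(\ln r)^{k_2+1}$ (the extra logarithm being needed only near the resonant exponent $|m|=k_1-2$, which occurs when $k_1-2\in\mathbb N$), and for $|m|\ge M$ the sharper
$$
|v_m(r)|\le \frac{Cc_0}{(1+|m|)^2}\,r^{2-k_1}(\ln r)^{k_2},
$$
the two factors of $(1+|m|)^{-1}$ coming from the $\frac1{2|m|}$ prefactor and from $\int_1^r s^{1+|m|-k_1}\,ds\sim r^{2+|m|-k_1}/(2+|m|-k_1)\sim r^{2+|m|-k_1}/|m|$ for large $|m|$.

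It remains to set $v(r\omega):=\sum_m v_m(r)e^{im\theta}$. The above bounds make the series converge uniformly on $\{|x|\ge a\}$ for each $a>1$, so $v\in C^0(\mathbb R^2\setminus\overline{B_1})$; since each $v_me^{im\theta}\in C^2$ solves its mode equation, $v$ is a distributional solution of $\Delta v=g$, and as $g\in C^\infty$ interior elliptic regularity (Weyl's lemma) upgrades $v$ to a classical smooth solution, bypassing any need to differentiate the series term by term. Finally,
$$
|v(r\omega)|\le\sum_{|m|<M}|v_m(r)|+\sum_{|m|\ge M}|v_m(r)|\le Cc_0\,r^{2-k_1}(\ln r)^{k_2+1}+Cc_0\,r^{2-k_1}(\ln r)^{k_2}\sum_{|m|\ge M}\frac{1}{(1+|m|)^2},
$$
which is $\le Cc_0\,|x|^{2-k_1}(\ln|x|)^{k_2+1}$, as claimed.

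The main obstacle is the summable-in-$m$ estimate for $v_m$: the uniform bound $|g_m|\le Cc_0 r^{-k_1}(\ln r)^{k_2}$ by itself only yields $|v_m(r)|\lesssim (1+|m|)^{-1}r^{2-k_1}(\ln r)^{k_2}$, which is not summable over $m$, so one must extract the second power of $(1+|m|)^{-1}$ from the inner integral as above, and one must correctly isolate the finitely many low (in particular resonant) modes and absorb their slightly worse behavior into the extra logarithm. Once this per-mode analysis is in place, the reassembly and the regularity step are routine.
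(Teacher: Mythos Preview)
Your proposal is correct, and the overall strategy --- Fourier expansion in the angular variable, solving the resulting ODEs mode by mode via variation of parameters, with the same split of integration endpoints at the resonant index $|m|=k_1-2$ --- is the same as the paper's. The difference lies in how the reassembled series is shown to satisfy the pointwise bound \eqref{equ:FastVanishing}. The paper works in $L^2(\mathbb S^1)$: it uses Parseval, $\sum_{k,m}|b_{k,m}(r)|^2=\|g(r\cdot)\|_{L^2}^2$, together with Cauchy--Schwarz on each integral, to bound $\sum_{k,m}|a_{k,m}(r)|^2$, obtaining an $L^2(\mathbb S^1)$ estimate on $v(r\cdot)$; the pointwise bound is then recovered via a scaled weak Harnack inequality applied to \eqref{equ:Laplace} on dyadic annuli. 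You instead use only the uniform-in-$m$ pointwise bound $|g_m(r)|\le Cc_0 r^{-k_1}(\ln r)^{k_2}$ and extract the summable factor $(1+|m|)^{-2}$ directly from the integral formula for $v_m$, one power from the $\tfrac{1}{2|m|}$ prefactor and one from the large-$|m|$ asymptotics of $\int s^{1\pm|m|-k_1}(\ln s)^{k_2}\,ds$. Your route is more elementary in that it avoids the weak Harnack step entirely; the paper's route is cleaner in that summability comes for free from Parseval, without tracking the $m$-dependence of the integrals.
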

\begin{proof}
In polar coordinate we have
$$
\Delta v=\dfrac{\partial^2v}{\partial r^2}+\frac{1}{r}\frac{\partial v}{\partial r}+\frac{1}{r^{2}} \frac{\partial^{2} v}{\partial \theta^{2}},
$$
where $r$ represents the radial distance and $\theta$ the angle.
Let
$$
Y_1^{(0)}(\theta)\equiv \dfrac{1}{\sqrt{2\pi}},\quad Y_1^{(k)}(\theta)=\dfrac{1}{\sqrt \pi}\cos k\theta\quad\text{and}\quad
Y_2^{(k)}(\theta)=\dfrac{1}{\sqrt \pi}\sin k\theta,
$$
which forms a complete  standard orthogonal basis of $L^2(\mathbb S^1)$.
Decompose $g$ and the wanted solution $v$
into
\begin{equation}\label{equ-star}
v(x)=a_{0,1}(r)+\sum_{k=1}^{+\infty}\sum_{m=1}^{2} a_{k, m}(r) Y_{m}^{(k)}(\theta),\quad
g(x)=b_{0,1}(r)+\sum_{k=1}^{+\infty}\sum_{m=1}^{2} b_{k, m}(r) Y_{m}^{(k)}(\theta),
\end{equation}
where
 $r=|x|, \theta=\frac{x}{|x|}$ and $$a_{k,m}(r):=\int_{\mathbb{S}^{n-1}} v(r \theta) \cdot Y_{m}^{(k)}(\theta) \mathtt{d} \theta,\quad b_{k,m}(r):=\int_{\mathbb{S}^{n-1}} g(r\theta) \cdot Y_{m}^{(k)}(\theta) \mathtt{d} \theta.$$
By the linear independence of $Y_m^{(k)}(\theta)$, \eqref{equ:Laplace} implies that
$$
a_{0,1}''(r)+\frac{1}{r}a_{0,1}'(r)=b_{0,1}(r)\quad\text{in }r>1
$$
and for all $k\in\mathbb{N}_*$ with $m=1,2,$
\begin{equation}\label{Equ-equ-equ}
a_{k,m}^{\prime \prime}(r)+\frac{1}{r} a_{ k,m}^{\prime}(r)-\frac{k^2}{r^{2}} a_{k,m}(r) =b_{k,m}(r)\quad\text{in }r>1.
\end{equation}

By solving the ODE, there exist constants $C_{k,m}^{(1)},C_{k,m}^{(2)}$ such that for all $r>1$,
\begin{equation}\label{Equ-def-Wronski}
\begin{array}{lll}
  a_{k,m}(r)&=&C_{k,m}^{(1)}r^k+
C_{k,m}^{(2)}r^{-k}\\
&&\displaystyle
-\dfrac{1}{2k}r^k\int_{2}^r\tau^{1-k}b_{k,m}(\tau)\mathtt{d} \tau
+\dfrac{1}{2k}r^{-k}\int_{2}^r\tau^{1+k}b_{k,m}(\tau)\mathtt{d} \tau
\end{array}
\end{equation}
for $k\geq 1$
and
$$
\begin{array}{lllll}
a_{0,1}(r)&=& C_{0,1}^{(1)}+C_{0,1}^{(2)}\ln r\\
&&\displaystyle -\int_2^r \tau\ln\tau  b_{0,1}(\tau)d \tau+\ln r\int_2^r \tau b_{0,1}(\tau) d \tau.
\end{array}
$$
 By \eqref{equ:vanishing-g-Lp},
\begin{equation}\label{Equ-converge}
|b_{0,1}(r)|^2+\sum_{k=1}^{+\infty}\sum_{m=1}^{2}|b_{k,m}(r)|^2=||g(r\cdot)||^2_{L^2(\mathbb{S}^{n-1})}
\leq c_0^2(2\pi)^{\frac{p-2}{p}}r^{-2k_1}(\ln r)^{2k_2}
\end{equation}
for all $r>1$. Then $r^{1-k}b_{k,m}(r)\in L^1(2,+\infty)$  for all $k\geq 1$ and $r^{k+1}b_{k,m}(r)\in L^1(2,+\infty)$  for all
$1\leq k<k_1-2$. We choose $C_{k,m}^{(1)}$ and $C_{k,m}^{(2)}$ in \eqref{Equ-def-Wronski} such that
\begin{equation}\label{equ:def-v-4}
  a_{0,1}(r):=-\int_{+\infty}^r\tau\ln \tau b_{0,1}(\tau)d\tau+\ln r\int_{+\infty}^r\tau b_{0,1}(\tau)d\tau,
\end{equation}
\begin{equation}\label{Equ-def-v-2}
a_{k,m} (r):=
- \dfrac{1}{2k}r^k\int_{+\infty}^r\tau^{1-k}b_{k,m}(\tau)\mathtt{d} \tau
+ \dfrac{1}{2k}r^{-k}\int_{+\infty}^r\tau^{1+k}b_{k,m}(\tau)\mathtt{d} \tau
\end{equation}
for all $1\leq k<k_1-2$ and
\begin{equation}\label{Equ-def-v-3}
a_{k,m} (r):=
- \dfrac{1}{2k}r^k\int_{+\infty}^r\tau^{1-k}b_{k,m}(\tau)\mathtt{d} \tau
+ \dfrac{1}{2k}r^{-k}\int_{2}^r\tau^{1+k}b_{k,m}(\tau)\mathtt{d} \tau
\end{equation}
for all $k\geq \max\{1,k_1-2\}$.

To prove that the series $v(x)$ defined by \eqref{equ-star} converges and obtain its convergence speed, we separate into three cases according to the value of $k_1-2$. Hereinafter, we let $[k]$ denote the largest natural number no larger than $k$, especially for $k\leq 0$, $[k]=0$.

For $k_1-2\not\in\mathbb N$, we pick $0<\varepsilon:=\frac{1}{2}\min\{1,\mathtt{dist}(k_1-2,\mathbb N)\}$ such that
$$
\left\{
\begin{array}{lll}
3-2k_1+\varepsilon<-1,\\
3+2k-2k_1+\varepsilon<-1 & \forall~ 1\leq k\leq [k_1-1]-1,\\ 3+2k-2k_1-\varepsilon>-1 & \forall~ k\geq [k_1-1].
\end{array}
\right.
$$
Thus \eqref{Equ-converge} implies$$
\begin{array}{llll}
&\displaystyle a_{0,1}^2(r)+\sum_{k=1}^{+\infty}\sum_{m=1}^{2}a_{k,m}^2(r)\\
  \leq & \displaystyle  2\left|\int_{+\infty}^{r} \tau \ln \tau b_{0,1}(\tau) d \tau\right|^2+2\left|\ln r\int_{+\infty}^r\tau b_{0,1}(\tau)d\tau\right|^2
  +2\sum_{k=1}^{+\infty}\sum_{m=1}^2r^{2k}\left|
  \int_{+\infty}^r\tau^{1-k}b_{k,m}(\tau)d\tau
  \right|^2\\
  &\displaystyle
  +2\sum_{k=1}^{[k_1-1]-1}\sum_{m=1}^2r^{-2k}\left|\int_{+\infty}^r\tau^{1+k}b_{k,m}(\tau)d\tau\right|^2
  +2\sum_{k=[k_1-1]}^{+\infty}\sum_{m=1}^2r^{-2k}\left|
\int_2^r\tau^{1+k}b_{k,m}(\tau)d\tau
\right|^2\\
\leq & \displaystyle 2\int_r^{+\infty}\tau^{-1-\epsilon}(\ln\tau)^2 d\tau \int_r^{+\infty}\tau^{3+\epsilon}b_{0,1}^2(\tau)d\tau+2(\ln r)^2\int_r^{+\infty} \tau^{-1-\epsilon}d\tau \int_r^{+\infty}\tau^{3+\epsilon}b_{0,1}^2(\tau)d\tau\\
&+2\displaystyle\sum_{k=1}^{+\infty}\sum_{m=1}^2r^{2k}\int_r^{+\infty}\tau^{-1-2k-\epsilon}d\tau
\cdot\int_r^{+\infty}\tau^{3+\epsilon}b_{k,m}^2(\tau)d\tau\\
&+2\displaystyle \sum_{k=1}^{[k_1-1]-1}\sum_{m=1}^2r^{-2k}\int_r^{+\infty}\tau^{3+2k-2k_1+\epsilon}(\ln\tau)^{2k_2}d\tau\cdot
\int_r^{+\infty}\tau^{2k_1}(\ln\tau)^{-2k_2}b_{k,m}^2(\tau)\frac{d\tau}{\tau^{1+\epsilon}}\\
\end{array}
$$
$$
\begin{array}{llll}
&+2\displaystyle
\sum_{k=[k_1-1]}^{+\infty}\sum_{m=1}^2r^{-2k}\int_2^r\tau^{3+2k-2k_1-\epsilon}(\ln\tau)^{2k_2}d\tau
\cdot \int_2^r\tau^{2k_1}(\ln\tau)^{-2k_2}b_{k,m}^2(\tau)\frac{d\tau}{\tau^{1-\epsilon}}\\
\leq & \displaystyle Cr^{-\epsilon}(\ln r)^2 \int_r^{+\infty}\tau^{3+\epsilon}\left(b_{0,1}^2(\tau)
+\sum_{k=1}^{\infty}\sum_{m=1}^{2}b_{k,m}^2(\tau)\right)d\tau \\
&\displaystyle +Cr^{4-2k_1+\epsilon}(\ln\tau)^{2k_2}\cdot \int_r^{+\infty}\tau^{2k_1}(\ln\tau)^{-2k_2}
\sum_{k=1}^{[k_1-1]-1}\sum_{m=1}^2
b_{k,m}^2(\tau)\frac{d\tau}{\tau^{1+\epsilon}}\\
&\displaystyle +Cr^{4-2k_1-\epsilon}(\ln\tau)^{2k_2}\cdot \int_2^r\tau^{2k_1}(\ln\tau)^{-2k_2}
\sum_{k=[k_1-1]}^{+\infty}\sum_{m=1}^2
b_{k,m}^2(\tau)\frac{d\tau}{\tau^{1-\epsilon}}\\
\leq & Cc_0^2r^{4-2k_1}(\ln r)^{2k_2+2}.\\
\end{array}
$$
For $k_1-2\in\mathbb N$, we pick $\varepsilon:=\frac{1}{2}$. Then
$$
\left\{
\begin{array}{lll}
3-2k_1+\varepsilon<-1,\\
3+2k-2k_1+\varepsilon<-1 & \forall~ 1\leq k\leq k_1-3,\\
3+2k-2k_1-\varepsilon>-1 & \forall~ k\geq k_1-1.
\end{array}
\right.
$$
Similar to the calculus above, \eqref{Equ-converge} implies
\begin{equation}\label{equ-estimate-4}
\begin{array}{llll}
  &\displaystyle a_{0,1}^2(r)+\sum_{k=1}^{+\infty} \sum_{m=1}^{2} a_{k, m}^{2}(r)\\
  \leq & \displaystyle Cc_0^2  r^{4-2k_1}(\ln r)^{2k_2+2}+C
   \sum_{m=1}^{2}r^{4-2k_1}\left|\int_2^r\tau^{k_1-1}b_{k_1-2,m}(\tau) d\tau\right|^2\\
  \leq &  Cc_0^2  r^{4-2k_1}(\ln r)^{2k_2+2}\\
  & +\displaystyle C\sum_{m=1}^{2}
  \int_2^r\tau^{2k_1}(\ln\tau)^{-2k_2}b_{k_1-2,m}^2(\tau)\dfrac{ d\tau}{\tau}
  \int_2^r\tau^{-1}(\ln\tau)^{2k_2} d\tau\\
  \leq & Cc_0^2 r^{4-2k_1}(\ln r)^{2k_2+2}.\\
\end{array}
\end{equation}
This proves that $v(r)$ is well-defined, is a solution of \eqref{equ:Laplace} in distribution sense \cite{Gunther-ConfoNormCoord} and satisfies
\begin{equation}\label{equ-estimate-spherical}
||v(r\cdot)||^2_{L^2(\mathbb S^{1})}\leq Cc_0^2 r^{4-2k_1}(\ln r)^{2k_2+2}
\end{equation}
By interior regularity theory of elliptic differential equations,  $v$ is smooth \cite{GT}. It remains to prove the pointwise decay rate at infinity.

For any $r\gg 1$, we set
$$
v_r(x):=v(rx)\quad\forall~x\in B_4\setminus B_{1}=:D.
$$
Then $v_r$ satisfies
\begin{equation}\label{Equ-scaled}
\Delta v_r=r^2g(rx)=:g_r(x)\quad\text{in}~D.
\end{equation}
By weak Harnack inequality (see for instance Theorem 8.17 of \cite{GT}, see also (2.11) of \cite{Gunther-ConfoNormCoord}),
$$
\sup_{2<|x|<3}|v_r(x)|\leq C(p)\cdot \left(
||v_r||_{L^2(D)}+||g_r||_{L^p(D)}
\right).
$$
By \eqref{equ-estimate-spherical},
$$
\begin{array}{llll}
  ||v_r||_{L^2(D)}^2 & = & \displaystyle  \dfrac{1}{r^2}\int_{B_{4r}\setminus B_{r}}|v(x)|^2 d x\\
  &=&\displaystyle r^{-2}\int_{ r}^{4r}||v(\tau\theta)||_{L^2(\mathbb S^{1})}^2\cdot \tau d \tau\\
  &\leq & Cc_0^2  r^{4-2k_1}(\ln r)^{2k_2+2}.\\
\end{array}
$$
By \eqref{equ:vanishing-g-Lp},
$$
\begin{array}{llll}
  ||g_r||_{L^p(D)}^p & = &\displaystyle
  \dfrac{r^{2p}}{r^2}\int_{B_{4r}\setminus B_{ r}}|g(x)|^p d x\\
  &\leq & \displaystyle Cc_0^p   r^{2p-2}\int_{ r}^{4r}\tau^{-pk_1}(\ln\tau)^{pk_2}\cdot \tau d \tau\\
  &\leq & Cc_0^p  r^{2p-pk_1}(\ln r)^{pk_2}.\\
\end{array}
$$
Combining the estimates above, we have
$$
\sup_{2r<|x|<3r}|v(x)|=
\sup_{2<|x|<3}|v_r(x)|\leq
Cc_0r^{2-k_1}(\ln r)^{k_2+1},
$$
where $C$ relies only on $k_1,k_2$ and $p$.
\end{proof}

By H\"older inequality, the constant $C$ relying on $p$ in \eqref{equ:FastVanishing}  remains finite when $p=\infty$ in  \eqref{equ:vanishing-g-Lp}.
For reading simplicity, hereinafter we let $v_g$ denote the solution constructed in Lemma \ref{Lem-fastConverge}.
Similar to Lemma 3.2 of \cite{bao-liu2020asymptotic}, vanishing speed of derivatives of $v_g$ follow immediately by Schauder estimates.
\begin{lemma}\label{Lem-fastConverge-2}
  Let $g\in C^{\infty}(\mathbb R^2)$ satisfy
  \begin{equation}\label{equ:vanishing-g}
  g=O_l(|x|^{-k_1}(\ln|x|)^{k_2})\quad\text{as }|x|\rightarrow\infty
  \end{equation}
  for some $k_1>2,k_2\geq 0$ and $l-1\in\mathbb N$. Then
  $$
  v_g=O_{l+1}(|x|^{2-k_1}(\ln |x|)^{k_2+1}).
  $$
\end{lemma}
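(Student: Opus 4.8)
The plan is to combine the pointwise $C^0$-decay of $v_g$ already established in Lemma \ref{Lem-fastConverge} with classical interior Schauder estimates applied to dyadic rescalings, exactly in the spirit of the last part of the proof of Lemma \ref{Lem-fastConverge}. First I record what is already available: under \eqref{equ:vanishing-g}, hypothesis \eqref{equ:vanishing-g-Lp} of Lemma \ref{Lem-fastConverge} holds for every $p\ge 2$ (indeed with $p=\infty$), so $v_g\in C^\infty(\mathbb R^2\setminus\overline{B_1})$ solves $\Delta v_g=g$ and satisfies \eqref{equ:FastVanishing}. Moreover, reading the interior step of that proof on a fixed annulus $\mathcal A:=B_8\setminus B_{1/2}$ instead of $B_4\setminus B_1$ — which is legitimate since the spherical bound \eqref{equ-estimate-spherical} together with the weak Harnack inequality works on any fixed compact annulus away from the origin — gives, for each $r\gg 1$,
$$\sup_{\mathcal A}|v_r|\le Cc_0\, r^{2-k_1}(\ln r)^{k_2+1},\qquad v_r(x):=v_g(rx).$$

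Next I apply interior Schauder estimates to the rescaled equation $\Delta v_r=g_r$ in $\mathcal A$, where $g_r(x):=r^2 g(rx)$ (cf.\ \eqref{Equ-scaled}). From \eqref{equ:vanishing-g}, for $x\in\mathcal A$ and $0\le j\le l$ one has $|D^j g_r(x)|=r^{2+j}|(D^j g)(rx)|\le Cc_0\, r^{2-k_1}(\ln r)^{k_2}$, and since $\mathcal A$ has fixed size the Hölder seminorm of $D^{l-1}g_r$ is controlled by $\sup_{\mathcal A}|D^l g_r|$; hence $\|g_r\|_{C^{l-1,\alpha}(\mathcal A)}\le Cc_0\, r^{2-k_1}(\ln r)^{k_2}$ for a fixed $\alpha\in(0,1)$, with $C$ independent of $r$. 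Classical interior Schauder estimates for $\Delta$ (see e.g.\ \cite{GT}) then yield
$$\|v_r\|_{C^{l+1,\alpha}(B_3\setminus B_2)}\le C\big(\|v_r\|_{C^0(\mathcal A)}+\|g_r\|_{C^{l-1,\alpha}(\mathcal A)}\big)\le Cc_0\, r^{2-k_1}(\ln r)^{k_2+1}.$$

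Finally I undo the scaling: for $0\le j\le l+1$ and $x$ with $2r<|x|<3r$, $|D^j v_g(x)|=r^{-j}|(D^j v_r)(x/r)|\le r^{-j}\|v_r\|_{C^{l+1}(B_3\setminus B_2)}\le Cc_0\, r^{2-k_1-j}(\ln r)^{k_2+1}\le Cc_0\,|x|^{2-k_1-j}(\ln|x|)^{k_2+1}$, using $r\le|x|\le 3r$; since $r\gg 1$ was arbitrary this is precisely $v_g=O_{l+1}(|x|^{2-k_1}(\ln|x|)^{k_2+1})$. The calculation is entirely routine; the only point that needs a small amount of care is the very first step, namely making the $C^0$ bound of Lemma \ref{Lem-fastConverge} available on a full annulus (rather than the thin shell $2<|x|<3$ stated there) so that Schauder has room to operate — but, as noted, this follows verbatim from \eqref{equ-estimate-spherical} and the weak Harnack inequality run on $\mathcal A$. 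I therefore expect no genuine obstacle, only the bookkeeping of the scaling exponents.
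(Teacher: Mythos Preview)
Your proposal is correct and follows exactly the route the paper indicates (``follow immediately by Schauder estimates'', via dyadic rescaling). One minor simplification: your concern about upgrading the $C^0$ bound from the thin shell $2<|x|<3$ to the full annulus $\mathcal A$ is unnecessary, since the conclusion \eqref{equ:FastVanishing} of Lemma~\ref{Lem-fastConverge} is already stated as a pointwise bound for all $|x|>1$, so $\sup_{\mathcal A}|v_r|\le Cc_0\,r^{2-k_1}(\ln r)^{k_2+1}$ follows directly without rerunning the weak Harnack step.
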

Consequently, similar to Lemma 3.3, Proposition 3.4 and Corollary 3.5 of \cite{bao-liu2020asymptotic}, we have asymptotic expansion of solutions of \eqref{equ:Laplace} etc.
\begin{proposition}\label{prop:expansion}
  Let $v$ be a classical solution of
  \begin{equation}\label{equ:Poisson}
  a_{ij}(x)D_{ij}v=g(x)\quad\text{in }\mathbb R^2\setminus\overline{B_1}
  \end{equation}
  with $v$ satisfying \eqref{equ:asym-decomp},  $g$ satisfying \eqref{equ:vanishing-g} for some $k_1>3,k_2\geq 0$, $l-1\in\mathbb N$ and
  $$
  a_{ij}(x)-a_{ij}(\infty)=O_l(|x|^{-2})
  $$
  for some positive matrix $[a_{ij}(\infty)]$. Then
   $$
   Q=[a_{ij}(\infty)]^{-1}
   $$
  and there exist constants $d_1,d_2$ such that
  $$
  \begin{array}{llll}
  v&=&\displaystyle  d\ln x^TQx+(x^TQx)^{-\frac{1}{2}}\left(d_1 \cos \theta +
  d_2 \sin \theta\right)
  \\
  &&\displaystyle +\left\{
  \begin{array}{llll}
    O_{l+1}(|x|^{2-k_1}(\ln|x|)^{k_2+1}), & k_1\leq 4,\\
    O_{l+1}(|x|^{-2}(\ln|x|)), & k_1>4,\\
  \end{array}
  \right.\\
  \end{array}
  $$
  as $|x|\rightarrow\infty$, where $\theta(x)=\frac{Q^{\frac{1}{2}} x}{\left(x^{T} Q x\right)^{\frac{1}{2}}}$.
\end{proposition}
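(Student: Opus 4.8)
The plan is to reduce \eqref{equ:Poisson} to a Poisson equation with a fast-decaying right-hand side, strip off the slowly-decaying part of the solution by hand with the help of Lemmas \ref{Lem-fastConverge}--\ref{Lem-fastConverge-2}, and then read the expansion off the classical Laurent--Fourier expansion of a planar harmonic function in an exterior domain.

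\emph{Reduction to a Poisson equation.} Set $A_\infty:=[a_{ij}(\infty)]$ and $w:=v-d\ln(x^TQx)$; by \eqref{equ:asym-decomp} one has $w=O_l(|x|^{-1})$, hence $D^2w=O_{l-2}(|x|^{-3})$, while $D^2(d\ln(x^TQx))$ is smooth and homogeneous of degree $-2$. A direct computation shows that in dimension two $A_\infty D_{ij}\ln(x^TRx)$ vanishes identically precisely when $R$ is a positive multiple of $A_\infty^{-1}$; this, together with the normalisation recorded in \eqref{equ:def-matrixQ}, is the assertion $Q=[a_{ij}(\infty)]^{-1}$, and from it the frozen operator annihilates the leading $\ln$-term. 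Using $a_{ij}(x)-a_{ij}(\infty)=O_l(|x|^{-2})$ I would then rewrite \eqref{equ:Poisson} as
\[
  a_{ij}(\infty)D_{ij}w=g-d\,(a_{ij}-a_{ij}(\infty))D_{ij}\ln(x^TQx)-(a_{ij}-a_{ij}(\infty))D_{ij}w=:\tilde g,
\]
the first correction being $O_l(|x|^{-4})$ (a product of two $O(|x|^{-2})$ factors) and the second $O_{l-2}(|x|^{-5})$, so that $\tilde g=O_{l-2}\big(|x|^{-\min\{k_1,4\}}(\ln|x|)^{k_2}\big)$ with $\min\{k_1,4\}>3$. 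Finally, putting $y=Q^{1/2}x$ and $\bar w(y):=w(Q^{-1/2}y)$ turns $a_{ij}(\infty)D^x_{ij}$ into $\Delta^y$ and $x^TQx$ into $|y|^2$, so $\bar w$ solves $\Delta\bar w=O\big(|y|^{-\min\{k_1,4\}}(\ln|y|)^{k_2}\big)$ in an exterior domain of $\mathbb R^2$ with $\bar w=O_l(|y|^{-1})$ and no logarithm in its leading behaviour.

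\emph{Peeling and expansion.} By Lemma \ref{Lem-fastConverge-2} there is a solution $v_{\tilde g}$ of $\Delta v_{\tilde g}=\tilde g$ (in $y$-coordinates) with $v_{\tilde g}=O_{l+1}\big(|y|^{2-\min\{k_1,4\}}(\ln|y|)^{k_2+1}\big)$, and since $2-\min\{k_1,4\}<-1$ the difference $h:=\bar w-v_{\tilde g}$ is harmonic in $\mathbb R^2\setminus\overline{B_{R'}}$ with $h=O(|y|^{-1})$. Expanding $h$ in the usual basis $1,\ln|y|,|y|^{\pm k}\cos k\phi,|y|^{\pm k}\sin k\phi$, the $O(|y|^{-1})$ bound kills the harmonic-polynomial part as well as the constant and the $\ln|y|$ terms, leaving $h=|y|^{-1}(d_1\cos\phi+d_2\sin\phi)+O(|y|^{-2})$ for suitable constants $d_1,d_2$. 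Adding back $v_{\tilde g}$ and undoing the linear change of variables — using $|y|^2=x^TQx$ and $y/|y|=Q^{1/2}x/(x^TQx)^{1/2}=\theta(x)$, so that $(\cos\phi,\sin\phi)$ becomes $(\cos\theta,\sin\theta)$ — yields
\[
  v=d\ln(x^TQx)+(x^TQx)^{-\frac12}(d_1\cos\theta+d_2\sin\theta)+R(x),
\]
where $R$ is the larger of the $O(|x|^{-2})$ coming from $h$ and the contribution of $v_{\tilde g}$: this gives $O_{l+1}(|x|^{2-k_1}(\ln|x|)^{k_2+1})$ when $k_1\le4$ and $O_{l+1}(|x|^{-2}\ln|x|)$ when $k_1>4$, the surviving logarithm in the latter case being exactly what Lemma \ref{Lem-fastConverge} produces from the $k=0$ Fourier mode of the $O(|x|^{-4})$ source.

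\emph{Bootstrap, and the main obstacle.} The number of derivatives in the conclusion is recovered exactly as in Lemma 3.3, Proposition 3.4 and Corollary 3.5 of \cite{bao-liu2020asymptotic}: once the improved expansion is known one re-expands $D^2v$, feeds the sharper asymptotics of the coefficient-perturbation term back into $\tilde g$, and iterates, with interior Schauder estimates on the annuli $B_{2r}\setminus B_r$ (as in the proof of Lemma \ref{Lem-fastConverge}) upgrading the decay of the derivatives at each step. The only genuinely dimension-two ingredient is the identity $a_{ij}(\infty)D_{ij}\ln(x^TQx)\equiv0$ for $Q=[a_{ij}(\infty)]^{-1}$, which makes the $\ln$-term exact and confines the error to an $O(|x|^{-4})$ Poisson source; in $n\ge3$ this reduction is different and harder (cf. \cite{bao-liu2020asymptotic}). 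Beyond that, the work is bookkeeping — checking that the coefficient perturbation contributes no worse than $O(|x|^{-4})$ so that Lemma \ref{Lem-fastConverge-2} delivers precisely the claimed (logarithmically corrected) remainder, and propagating the regularity through the iteration.
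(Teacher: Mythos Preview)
Your proof is correct and follows the paper's strategy: freeze the coefficients, move the variable part to the right-hand side, build a fast-decaying particular solution with Lemma~\ref{Lem-fastConverge-2}, and read the expansion off the harmonic remainder via spherical harmonics; the change of variables $y=Q^{1/2}x$ is exactly the paper's Step~2. The only ordering difference is that you subtract $d\ln(x^TQx)$ at the outset, using that $a_{ij}(\infty)D_{ij}\ln(x^TQx)\equiv 0$ when $Q\propto A_\infty^{-1}$, whereas the paper keeps $v$ intact and lets the $\ln$-term surface in the Laurent expansion of the harmonic function $v-v_{\bar g}$; the paper then deduces $Q\propto I$ (in the case $a_{ij}(\infty)=\delta_{ij}$) from that expansion, since a bounded harmonic function on an exterior domain tending to $d\ln(\hat x^TQ\hat x)$ on circles forces this limit to be constant.

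There is one small logical slip in your write-up. Within the proposition, $Q$ is a datum of hypothesis~\eqref{equ:asym-decomp} and the equality $Q=[a_{ij}(\infty)]^{-1}$ is part of the \emph{conclusion}; it cannot be imported from~\eqref{equ:def-matrixQ}, which lies outside the proposition. Your own identity already contains the fix: from the equation one has $a_{ij}(\infty)D_{ij}v=g+(a_{ij}(\infty)-a_{ij})D_{ij}v=O(|x|^{-\min\{k_1,4\}})=o(|x|^{-2})$, while from~\eqref{equ:asym-decomp} one has $a_{ij}(\infty)D_{ij}v=a_{ij}(\infty)D_{ij}(d\ln(x^TQx))+O(|x|^{-3})$; the first term on the right is homogeneous of degree $-2$, so it must vanish identically, and by your ``direct computation'' this forces $Q\propto A_\infty^{-1}$. (The exact normalisation is immaterial, as any scalar factor in the logarithm is absorbed into $\gamma$; the paper's proof has the same feature.)
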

\begin{proof}
  Step 1. We prove the special case $a_{ij}(\infty)=\delta_{ij}$. Rewrite Eq.~ \eqref{equ:Poisson} into
  $$
  \Delta v=g(x)+(\delta_{ij}-a_{ij}(x))D_{ij}v=:\overline g(x)\quad\text{in }\mathbb R^2\setminus\overline{B_1}.
  $$
  By the assumptions on $g$, $v$ and $a_{ij}(x)$, we have
  $$
  \bar g(x)=O_l(|x|^{-k_1}(\ln|x|)^{k_2})+O_l(|x|^{-4})=\left\{
  \begin{array}{llll}
    O_l(|x|^{-k_1}(\ln|x|)^{k_2}), & k_1\leq 4,\\
    O_l(|x|^{-4}), & k_1>4,\\
  \end{array}
  \right.
  $$
  as $|x|\rightarrow\infty$. By Lemma \ref{Lem-fastConverge-2} and condition \eqref{equ:asym-decomp}, we have a solution $v_{\bar g}$ such that
  $$
  \Delta (v-v_{\bar g})=0\quad\text{in }\mathbb R^2\setminus\overline{B_1}
  $$
  with $$
  v-v_{\bar g}=d\ln(x^TQx)+O(|x|^{-1})\quad\text{and}\quad v_{\bar g}=\left\{
  \begin{array}{llll}
    O_{l+1}(|x|^{2-k_1}(\ln|x|)^{k_2+1}), & k_1\leq 4,\\
    O_{l+1}(|x|^{-2}(\ln|x|)), & k_1>4.\\
  \end{array}
  \right.
  $$
  By spherical harmonic expansion as in the proof of Lemma \ref{Lem-fastConverge}, there exist constants $C_{k,m}^{(1)}$, $C_{k,m}^{(2)}$ such that
  $$
  v-v_{\bar g}=
  C_{0,1}^{(1)}+C_{0,1}^{(2)}\ln|x|+
  \sum_{k=1}^{\infty}\sum_{m=1}^{2}C_{k,m}^{(1)}|x|^kY_m^{(k)}(\theta)
  +\sum_{k=1}^{\infty}\sum_{m=1}^{2}C_{k,m}^{(2)} |x|^{-k}Y_m^{(k)}(\theta).
  $$
  By the vanishing speed of $v-v_{\bar g}$ and linear independence of $Y_m^{(k)}(\theta)$, we have $Q=I$
  and $C_{k,m}^{(1)}=0$ for all $k, m$.
  This finishes the proof of this case.

  Step 2. We prove for general $[a_{ij}(\infty)]>0$. As in Lemma 6.1 of \cite{GT}, let $\widetilde Q:=[a_{ij}(\infty)]^{\frac{1}{2}}$ and $V(x):=v(\widetilde Qx)$.
  Since trace is invariant under cyclic permutations,
  $$
  \Delta V(x)=(a_{ij}(\infty)-a_{ij}(\widetilde Qx))D_{ij}v(\widetilde Qx)=:\bar g(x)
  $$
  in ${\widetilde Q}^{-1}(\mathbb R^2\setminus\overline{B_1})$. Then the result follows exactly the same as previous step.
\end{proof}

Eventually, we finish this section by proving Theorem \ref{thm:classify-constant}.
Applying Newton--Leibnitz formula between $F_{\tau}(\lambda(D^2u))=C_0$ and $F_{\tau}(\lambda(A))=C_0$, we have
$$
a_{ij}(x)D_{ij}\left(u-\left(\frac{1}{2} x^{T} A x+\beta x+\gamma\right) \right)=0\quad\text{in }\mathbb R^2\setminus\overline{\Omega},
$$
for some bounded domain $\Omega\subset\mathbb R^2$,
where $F(M):=F_{\tau}(\lambda(M))$ and
$$
a_{ij}(x)=\int_0^1D_{M_{ij}}F(A+t(D^2u-A))dt\rightarrow
a_{ij}(\infty)=D_{M_{ij}}F(A)
$$
as $|x|\rightarrow\infty$. By \eqref{equ:asym-Behavior}, we have
$$
a_{ij}(x)-a_{ij}(\infty)=O_k(|x|^{-2})
$$
for all $k\in\mathbb N$. By Proposition \ref{prop:expansion}, the desire result follows immediately.

Remark \ref{rem:final} follows similarly.

\section{Proof of Theorem \ref{Thm:classify-n=2}}\label{sec-symmetric}

The proof of symmetry is very similar to higher dimension case. Similar to the proof in \cite{bao-liu2021symmetry}, by Proposition 2.1 of \cite{Jin-Xiong} and Theorem 1.1 of \cite{CLN-Remarks-III}, $u$ is continuous and  a viscosity subsolution of $F_{\tau}(\lambda(D^2u))=C_0$ in $\mathbb R^2$.

Firstly, we consider the case when $\beta=0$ and $A$ is diagonal i.e., $A=\mathtt{diag}(a_1,a_2)$. Consider
\begin{equation*}
U(x):=u(\widetilde{x}), \quad \text { where }\widetilde x=(x_1,-x_2)\quad\text{or }(-x_1,x_2).
\end{equation*}
By a direct computation,   $u, U$ satisfy
$$
\left\{
\begin{array}{llll}
  F_{\tau}(\lambda(D^2u))=F_{\tau}(\lambda(D^2U))=C_0,& \text{in }\mathbb R^2\setminus\{0\},\\
  u(x)=U(x), & \text{at }x=0,\\
  u(x),U(x)\rightarrow\left( \frac{1}{2}x^TAx+\gamma+d\ln(x^TQx)\right),& \text{as }|x|\rightarrow\infty,
\end{array}
\right.
$$
where $Q$ is a polynomial of $A$ and hence also diagonal.
By Newton--Leibnitz formula between $u,U$ and applying maximum principle as in Lemma 2.4 of \cite{bao-liu2021symmetry}, we have $u(x)=U(x)=u(\widetilde x)$. Hence $u(x)$ is hyperplane symmetry in $x_1$ and $x_2$ directions.

Consequently, the symmetry \eqref{equ:symmetry} holds also for general matrix $A$. In fact, by eigen-decomposition  there exists an orthogonal matrix $O$ such that
\begin{equation}\label{equ:eigendecomposition-2}
A=O^T\Lambda O,\quad\text{where}\quad \Lambda=\mathtt{diag}(a_1,a_2).
\end{equation}
Since $O^TO=I$ and $Q$ is a polynomial of $A$, which is denoted by $P(A)$, we have $$
Q=O^T\Gamma O,\quad\text{where}\quad \Gamma=P(\Lambda).
$$
Let $v(x):=u(O^{-1}x)$, then $D^2v(x)=O^{-1}D^2u(O^{-1}x)O$ has the same eigenvalues as $D^2u(O^{-1}x)$. Hence
$$
F_{\tau}(\lambda(D^2v))=C_0\quad\text{in }\mathbb R^2\setminus\{0\},
$$
$v(0)=u(0)$ and
$$
v(x)\rightarrow \frac{1}{2}x^TO^{-1}AOx+\gamma+d\ln(x^TO^{-1}QOx)=\frac{1}{2}x^T\Lambda x+\gamma+d\ln(x^T\Gamma x)
$$
as $|x|\rightarrow\infty$. Then the symmetry follows from the result above.

\section{Proof of Remark \ref{Remark:value-of-d}}\label{sec-remark}

Rewrite asymptotic behavior \eqref{equ:asym-Behavior} into
$u=W(x)+\Gamma(x)+O_k(|x|^{-1}),$
where
$$
W(x):=\frac{1}{2} x^{T} A x+\beta x+\gamma,\quad
\Gamma(x):=d \ln \left(x^{T} Q x\right).
$$
The computation of $d$ relies on the fact that $F_{\tau}(\lambda(D^2u))=C_0$ and
$F_{\tau}(\lambda(D^2W))=C_0$.
Let
$$
E_{R}:=\{x\in\mathbb R^2|x^TQx<R^2\}
$$
and $W_{ij}, \Gamma_{ij}$ denote the partial derivatives $D_{ij}W$, $D_{ij}\Gamma$ for all $i,j=1,2$.
\begin{lemma}\label{Lem-fundamentalSol}
  For a positive matrix $Q>0$ and $d\in\mathbb R$, $\Gamma$ defined above satisfies
  \begin{equation}\label{equ:distribution}
    \mathtt{div} (Q^{-1}\nabla\Gamma(x))=\dfrac{4\pi d}{|Q^{\frac{1}{2}}|} \delta_0\quad\text{in }\mathbb R^2,
  \end{equation}
  in distribution sense,
  where $\delta_0$ denotes the Dirac mass centered at origin.
\end{lemma}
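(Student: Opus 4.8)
The plan is to reduce the identity to the classical fact that $2d\ln|y|$ is a constant multiple of the fundamental solution of the Laplacian on $\mathbb{R}^2$, via the linear change of variables $y=Q^{\frac12}x$. Since $Q>0$ this map is invertible with $Q^{\pm\frac12}$ symmetric, and $x^TQx=|Q^{\frac12}x|^2$, so setting $v(y):=2d\ln|y|$ we have $\Gamma(x)=v(Q^{\frac12}x)$. The chain rule and the symmetry of $Q^{\frac12}$ give $\nabla_x\Gamma(x)=Q^{\frac12}(\nabla v)(Q^{\frac12}x)$, hence $Q^{-1}\nabla_x\Gamma(x)=Q^{-\frac12}(\nabla v)(Q^{\frac12}x)$; writing a test function $\varphi\in C_c^{\infty}(\mathbb{R}^2)$ as $\varphi(x)=\psi(Q^{\frac12}x)$ gives likewise $\nabla_x\varphi(x)=Q^{\frac12}(\nabla\psi)(Q^{\frac12}x)$. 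Pairing the two vector fields and using $Q^{-\frac12}Q^{\frac12}=I$ yields the pointwise identity $(Q^{-1}\nabla_x\Gamma)\cdot\nabla_x\varphi=(\nabla v\cdot\nabla\psi)(Q^{\frac12}x)$.

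Next I would test the distributional divergence against $\varphi$. Since $\nabla\Gamma=2d\,Qx/(x^TQx)$, we have $Q^{-1}\nabla\Gamma=2d\,x/(x^TQx)$, which together with $\Gamma$ itself is locally integrable on $\mathbb{R}^2$; hence the left-hand side of \eqref{equ:distribution} is by definition $-\int_{\mathbb{R}^2}(Q^{-1}\nabla_x\Gamma)\cdot\nabla_x\varphi\,dx$. Substituting $x=Q^{-\frac12}y$, with Jacobian $dx=|Q^{\frac12}|^{-1}\,dy$ where $|Q^{\frac12}|=\det Q^{\frac12}=(\det Q)^{1/2}$, and inserting the identity from the previous paragraph, this integral becomes $-|Q^{\frac12}|^{-1}\int_{\mathbb{R}^2}\nabla v\cdot\nabla\psi\,dy=|Q^{\frac12}|^{-1}\langle\Delta v,\psi\rangle$. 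Finally $\Delta(\tfrac{1}{2\pi}\ln|y|)=\delta_0$ in $\mathbb{R}^2$ gives $\Delta v=4\pi d\,\delta_0$, so the expression equals $\tfrac{4\pi d}{|Q^{\frac12}|}\psi(0)=\tfrac{4\pi d}{|Q^{\frac12}|}\varphi(0)$, i.e. $\mathtt{div}(Q^{-1}\nabla\Gamma)=\tfrac{4\pi d}{|Q^{\frac12}|}\delta_0$ in distribution sense.

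The argument is essentially bookkeeping with the matrix $Q^{\frac12}$. The one point deserving care is to confirm that the anisotropic operator $\mathtt{div}(Q^{-1}\nabla\,\cdot\,)$ is conjugated \emph{exactly}, with no leftover factors, to the Laplacian by the linear map $y=Q^{\frac12}x$, and that the only surviving constant is the Jacobian weight $|Q^{\frac12}|^{-1}=(\det Q)^{-1/2}$; I do not expect any further obstacle.
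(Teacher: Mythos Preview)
Your proof is correct and follows essentially the same route as the paper: both arguments perform the linear change of variables $y=Q^{1/2}x$, recognize $\Gamma$ as $2d\ln|y|$, and invoke the fundamental solution of the Laplacian in $\mathbb{R}^2$, pushing the identity back through the distributional pairing; the only surviving constant is the Jacobian factor $|Q^{1/2}|^{-1}$, exactly as you computed. Your write-up is in fact somewhat more explicit than the paper's on the chain-rule bookkeeping (the pointwise identity $(Q^{-1}\nabla_x\Gamma)\cdot\nabla_x\varphi=(\nabla v\cdot\nabla\psi)(Q^{1/2}x)$), but the substance is identical.
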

\begin{proof}
  Change of variable by setting $y:=Q^{\frac{1}{2}}x$, then
  $$
  \Gamma(x(y))=d\ln(x^TQx)=d\ln|y|^2.
  $$
On the one hand,
$$
\Delta_y\Gamma(x(y))=4\pi d \delta_0\quad\text{in }\mathbb R^2\setminus\{0\}.
$$
On the other hand, by  the equation satisfied by $\Gamma$, for any $\varphi\in C_c^{\infty}(\mathbb R^2)$ we have
$$
\int_{\mathbb R^2}\nabla_y \Gamma(x(y))\cdot \nabla_y \varphi(y) dy=4\pi d \varphi(0)
$$
and consequently
$$
\int_{\mathbb R^2}\nabla_x \Gamma(x) Q^{-1}\nabla_x \varphi(y(x))\cdot |Q^{\frac{1}{2}}|dx=4\pi d \varphi(0).
$$
Integral by parts and the result follows immediately.
\end{proof}

For $\tau=0$ case, the proof can be found in \cite{CL,RemarkMA-2020,Li-Li-Yuan-Bernstein-SPL} etc. For reading simplicity and further usage, we include a proof here.
Integral on $E_{R}\setminus\overline{\Omega'}$ for sufficiently large $R$, we have
$$
e^{2C_0}|E_R\setminus\overline{\Omega'}|=
\int_{E_R\setminus\overline{\Omega'}}\det D^2u dx=\int_{\partial(E_R\setminus\overline{\Omega'})}u_1(u_{22},-u_{12})\cdot\nu ds.
$$
By the asymptotic behavior,
$$
\begin{array}{lllll}
& \displaystyle \int_{\partial E_R}u_1(u_{22},-u_{12})\cdot\nu ds\\
=& \displaystyle \int_{\partial E_R} (W_1+\Gamma_1)(W_{22}+\Gamma_{22},-W_{12}-\Gamma_{12})\cdot\nu ds+O(R^{-1})\\
=& \displaystyle \int_{\partial E_R} W_1(W_{22},-W_{12})\cdot\nu ds\\
&\displaystyle +\int_{\partial E_R}W_1(\Gamma_{22},-\Gamma_{12})\cdot \nu ds
+\int_{\partial E_R}\Gamma_1(W_{22},-W_{12})\cdot \nu ds+O(R^{-1}).\\
\end{array}
$$
Integral by parts and we have
$$
\int_{\partial E_R}W_1(\Gamma_{22},-\Gamma_{12})\cdot \nu ds
=
\int_{E_R}(W_{11}\Gamma_{22}-W_{12}\Gamma_{12})dx,
$$
$$
\int_{\partial E_R}\Gamma_1(W_{22},-W_{12})\cdot \nu ds
=
\int_{E_R}(W_{22}\Gamma_{11}-W_{12}\Gamma_{12})dx.
$$
For simplicity, we assume $A$ is diagonal with eigenvalues $a_1,a_2$. Then we have
$Q=2A$. By Lemma \ref{Lem-fundamentalSol}, we have
$$
(2a_1)^{-1}\Gamma_{11}+(2a_2)^{-1}\Gamma_{22}=\dfrac{4\pi d }{(4a_1a_2)^{\frac{1}{2}}}\delta_0\quad\text{in }\mathbb R^2.
$$
Consequently,
$$
\begin{array}{llll}
&\displaystyle \int_{\partial E_R}\left(W_{1}\left(\Gamma_{22},-\Gamma_{12}\right)
+\Gamma_{1}\left(W_{22},-W_{12}\right)
\right)
\cdot \nu d s\\
=&\displaystyle \int_{E_R }(a_1\Gamma_{22}+a_2\Gamma_{11})dx\\
=&\displaystyle  4\pi d (a_1a_2)^{\frac{1}{2}}
= 4\pi d e^{C_0}.\\
\end{array}
$$
By the calculus above,
$$
e^{2C_0}|E_R\setminus\overline{\Omega'}|=e^{2C_0}|E_R|-\int_{\partial\Omega'}
u_1(u_{22},-u_{12})\cdot\nu ds
+4\pi d e^{C_0}+O(R^{-1}).
$$
Sending $R$ to infinity, \eqref{equ:represent-d-MA} follows immediately.

For $\tau\in (0,\frac{\pi}{4})$ case, Eq.~\eqref{equ:Perturbed-General} can be represented into
$$
\det (D^2(u+\frac{a-b}{2}|x|^2))=c_0\det (D^2(u+\frac{a+b}{2}|x|^2)),
$$
where $c_0:=\exp(\frac{2b}{\sqrt{a^2+1}}C_0)<1.$ By a direct computation, $W$ satisfies the equation above as well. Integral over $E_R\setminus\overline{\Omega'}$ for sufficiently large $R$, we have
$$
\begin{array}{lllll}
0&=&\displaystyle \int_{E_R\setminus\overline{\Omega'}}\left(\det (D^2(u+\frac{a-b}{2}|x|^2))-c_0\det (D^2(u+\frac{a+b}{2}|x|^2))\right)dx\\
&=& \displaystyle \int_{\partial (E_R\setminus\overline{\Omega'})}
(u_1+(a-b)x_1)(u_{22}+a-b,-u_{12})\cdot \nu ds\\
&&\displaystyle -c_0\int_{\partial (E_R\setminus\overline{\Omega'})}
(u_1+(a+b)x_1)(u_{22}+a+b,-u_{12})\cdot \nu ds.\\
\end{array}
$$
By the asymptotic behavior,
$$
\begin{array}{lllll}
  &\displaystyle \int_{\partial E_R}
(u_1+(a-b)x_1)(u_{22}+a-b,-u_{12})\cdot \nu ds\\
=&\displaystyle \int_{\partial E_R}
(W_1+\Gamma_1+(a-b)x_1)(W_{22}+\Gamma_{22}+a-b,-W_{12}-\Gamma_{12})\cdot \nu ds+O(R^{-1})\\
=&\displaystyle  \int_{\partial E_R}
(W_1+ (a-b)x_1)(W_{22}+a-b,-W_{12})\cdot \nu ds\\
&\displaystyle +\int_{\partial E_R}\Gamma_1(W_{22}+a-b,-W_{12})\cdot\nu ds
+\int_{\partial E_R}(W_1+(a-b)x_1)(\Gamma_{22},-\Gamma_{12})\cdot\nu ds +O(R^{-1}).\\
\end{array}
$$
Integral by parts and we have
$$
\int_{\partial E_R}\Gamma_1(W_{22}+a-b,-W_{12})\cdot\nu ds=
\int_{E_R}\left(\Gamma_{11}(W_{22}+(a-b))-\Gamma_{12}W_{12}\right)dx,
$$
$$
\int_{\partial E_R}(W_1+(a-b)x_1)(\Gamma_{22},-\Gamma_{12})\cdot\nu ds =
\int_{E_R}\left(\Gamma_{22}(W_{11}+(a-b))-\Gamma_{12}W_{12}\right)dx.
$$
Similarly,
$$
\begin{array}{lllll}
  &\displaystyle \int_{\partial E_R}
(u_1+(a+b)x_1)(u_{22}+a+b,-u_{12})\cdot \nu ds\\
=&\displaystyle \int_{\partial E_R}
(W_1+\Gamma_1+(a+b)x_1)(W_{22}+\Gamma_{22}+a+b,-W_{12}-\Gamma_{12})\cdot \nu ds+O(R^{-1})\\
=&\displaystyle  \int_{\partial E_R}
(W_1+ (a+b)x_1)(W_{22}+a+b,-W_{12})\cdot \nu ds\\
&\displaystyle +\int_{\partial E_R}\Gamma_1(W_{22}+a+b,-W_{12})\cdot\nu ds
+\int_{\partial E_R}(W_1+(a+b)x_1)(\Gamma_{22},-\Gamma_{12})\cdot\nu ds +O(R^{-1}).\\
\end{array}
$$
Similar to $\tau=0$ case, we consider the case where $A$ is diagonal with eigenvalues $a_1,a_2$. Then we have
$$
Q=\mathtt{diag}(\sin\tau a_1^2+2\cos\tau a_1+\sin\tau, \sin\tau a_2^2+2\cos\tau a_2+\sin\tau).
$$
By Lemma \ref{Lem-fundamentalSol} we have
$$
\begin{array}{llll}
 &(a_2+a-b)(a_2+a+b)\Gamma_{11}+(a_1+a-b)(a_1+a+b)\Gamma_{22}\\
=&4\pi d \sqrt{(a_1+a-b)(a_1+a+b)(a_2+a-b)(a_2+a+b)} \delta_0
\end{array}
\quad\text{in }\mathbb R^2
$$
and hence
$$
\begin{array}{lllll}
  &\displaystyle \int_{\partial E_R}\Gamma_1(W_{22}+a-b,-W_{12})\cdot\nu ds
+\int_{\partial E_R}(W_1+(a-b)x_1)(\Gamma_{22},-\Gamma_{12})\cdot\nu ds \\
&\displaystyle -c_0
\int_{\partial E_R}\Gamma_1(W_{22}+a+b,-W_{12})\cdot\nu ds
-c_0\int_{\partial E_R}(W_1+(a+b)x_1)(\Gamma_{22},-\Gamma_{12})\cdot\nu ds \\
=& \displaystyle \int_{E_R}\left(\Gamma_{11}(W_{22}+a-b)+\Gamma_{22}(W_{11}+a-b)\right)dx\\
&\displaystyle -c_0\int_{E_R}\left(\Gamma_{11}(W_{22}+a+b)+\Gamma_{22}(W_{11}+a+b)\right)dx\\
=& \displaystyle\dfrac{2b}{(a_1+a+b)(a_2+a+b)}
\int_{E_R}\left((a_2+a-b)(a_2+a+b)\Gamma_{11}+(a_1+a-b)(a_1+a+b)\Gamma_{22}\right)dx\\
=& \displaystyle 8\pi d  \left(\dfrac{(a_1+a-b)(a_2+a-b)}{(a_1+a+b)(a_2+a+b)}\right)^{\frac{1}{2}}\\
=& 8\pi d c_0^{\frac{1}{2}}.
\end{array}
$$
By the calculus above,
$$
\begin{array}{llllll}
  & 8\pi dc_0^{\frac{1}{2}}+O(R^{-1}) \\
=& \displaystyle \int_{\partial\Omega'}(u_1+(a-b)x_1)(u_{22}+a-b,-u_{12})\cdot \nu ds\\
&\displaystyle -c_0\int_{\partial\Omega'}(u_1+(a+b)x_1)(u_{22}+a+b,-u_{12})\cdot \nu ds.\\
\end{array}
$$
Sending $R$ to infinity, \eqref{equ:represent-d-small} follows immediately.

For $\tau=\frac{\pi}{4}$ case, Eq.~\eqref{equ:Perturbed-General} is a translated inverse harmonic hessian equation, whose asymptotic behavior is proved in Li--Li--Yuan \cite{Li-Li-Yuan-Bernstein-SPL}.
Since there are slightly different on the coefficients, we provide the proof here. Rewrite the equation into
$$
\Delta u+2+c_0\det (D^2(u+\frac{1}{2}|x|^2))=0,
$$
where $c_0=\frac{\sqrt 2}{2}C_0$.
By a direct computation, $W$ satisfies the equation above as well. Integral over $E_{R} \backslash \overline{\Omega^{\prime}}$ for sufficiently large $R$, we have
$$
\begin{array}{llll}
  -2|E_R\setminus\overline{\Omega'}| &= & \displaystyle
  \int_{E_R\setminus\overline{\Omega'}}\left(\Delta u+c_0\det \left(D^2\left(u+\frac{1}{2}|x|^2\right)\right)\right)d x\\
  &=&\displaystyle
  \int_{\partial (E_R\setminus\overline{\Omega'})}\left((u_1,u_2)\cdot \nu+c_0 (u_1+x_1)(u_{22}+1,-u_{12})\cdot \nu\right) ds.\\
\end{array}
$$
By the asymptotic behavior,
$$
\begin{array}{llll}
  &\displaystyle \int_{\partial E_R}(u_1,u_2)\cdot \nu+c_0 (u_1+x_1)(u_{22}+1,-u_{12})\cdot \nu ds\\
  =& \displaystyle \int_{\partial E_R}(W_1+\Gamma_1,W_2+\Gamma_2)\cdot \nu ds\\
  &\displaystyle +c_0
  \int_{\partial E_R} (W_1+\Gamma_1+x_1)(W_{22}+\Gamma_{22}+1,-W_{12}-\Gamma_{12})\cdot \nu ds+O(R^{-1})\\
  =&\displaystyle \int_{\partial E_R}(W_1,W_2)\cdot \nu+c_0 (W_1+x_1)(W_{22}+1,-W_{12})\cdot \nu ds\\
  &+\displaystyle \int_{\partial E_R}(\Gamma_1,\Gamma_2)\cdot \nu ds
  +c_0\int_{\partial E_R}\Gamma_1(W_{22}+1,-W_{12})\cdot \nu ds
  \\
  &+\displaystyle c_0\int_{\partial E_R}(W_1+x_1)(\Gamma_{22},-\Gamma_{12})\cdot \nu ds+O(R^{-1}).\\
\end{array}
$$
Integral by parts and we have
$$
\int_{\partial E_R} (\Gamma_1,\Gamma_2)\cdot \nu ds=\int_{E_R}(\Gamma_{11} +\Gamma_{22})dx,
$$
$$
\int_{\partial E_{R}} \Gamma_{1}\left(W_{22}+1,-W_{12}\right) \cdot \nu d s=
\int_{E_R}
\left(\Gamma_{11}(W_{22}+1)-\Gamma_{12}W_{12}\right)dx,
$$
$$
\int_{\partial E_R}(W_1+x_1)(\Gamma_{22},-\Gamma_{12})\cdot \nu ds=\int_{E_R}(\Gamma_{22}(W_{11}+1)-\Gamma_{12}W_{12})dx.
$$
Similar to $\tau=0$ case, we consider the case where $A$ is diagonal with eigenvalues $a_{1}, a_{2}$ satisfying $$
\dfrac{1}{a_1+1}+\dfrac{1}{a_2+1}=-c_0.
$$
Then we have
$$
Q=\mathtt{diag}\left(\frac{1}{2}(a_1+1)^2,\frac{1}{2}(a_2+1)^2\right).
$$
By Lemma \ref{Lem-fundamentalSol}, we have
$$
(a_2+1)^2\Gamma_{11}+(a_1+1)^2\Gamma_{22}=4\pi d \delta_0 \cdot (a_1+1)(a_2+1)
$$
in $\mathbb R^2$
and hence
$$
\begin{array}{lllll}
  &\displaystyle \int_{\partial E_R}\left(\Gamma_{1}, \Gamma_{2}\right)\cdot \nu ds
  +c_0\int_{\partial E_R}\Gamma_1(W_{22}+1,-W_{12})\cdot \nu ds
  \\
  &+\displaystyle c_0\int_{\partial E_R}(W_1+x_1)(\Gamma_{22},-\Gamma_{12})\cdot \nu ds\\
  =& \displaystyle \int_{E_R} (\Gamma_{11}+\Gamma_{22})d x-\dfrac{a_1+a_2+2}{(a_1+1)(a_2+1)}\int_{E_R}(\Gamma_{11}(a_2+1)+\Gamma_{22}(a_1+1))dx\\
  =&\displaystyle
  \int_{E_R} -\dfrac{\left(a_{2}+1\right)^{2} \Gamma_{11}+\left(a_{1}+1\right)^{2} \Gamma_{22}}{(a_1+1)(a_2+1)}dx\\
  =& \displaystyle  -4\pi d  .\\
\end{array}
$$
By the calculus above,
$$
\begin{array}{llllll}
 & -2\left|E_{R} \backslash \overline{\Omega^{\prime}}\right|\\
=& \displaystyle -2|E_R|-\int_{\partial\Omega'}\left(\left(u_{1}, u_{2}\right) \cdot \nu+c_{0}\left(u_{1}+x_{1}\right)\left(u_{22}+1,-u_{12}\right) \cdot \nu\right) d s-4\pi d.
\end{array}
$$
Sending $R$ to infinity, \eqref{equ:represent-d-mid} follows immediately.

For $\tau\in(\frac{\pi}{4},\frac{\pi}{2})$, the proof of \eqref{equ:represent-d-large} follows from $\tau=\frac{\pi}{2}$ case by identity \eqref{equ:identity}. For $\tau=\frac{\pi}{2}$, the proof can be found in Li--Li--Yuan \cite{Li-Li-Yuan-Bernstein-SPL}. The major technique here is to rewrite the equation into linear combination of $\Delta u$ and $\det D^2u$ as in
$$
\cos C_0\Delta u+\sin C_0\det D^2u=\sin C_0.
$$
Then following the same calculus above and Lemma \ref{Lem-fundamentalSol}, the result follows immediately.

\small

\bibliographystyle{plain}

\bibliography{AsymExpan-Dim2}

\bigskip

\noindent Z.Liu \& J. Bao

\medskip

\noindent  School of Mathematical Sciences, Beijing Normal University\\
Laboratory of Mathematics and Complex Systems, Ministry of Education\\
Beijing 100875, China \\[1mm]
Email: \textsf{liuzixiao@mail.bnu.edu.cn, jgbao@bnu.edu.cn}

\end{document}